\newcommand{\version}{March 7, 2025}
\setlist{nosep}
\def\hop#1{\relax} 
\title{Equivalent formulation of Thomassen's conjecture
using Tutte paths in claw-free graphs}
\author{Adam Kabela$^{1,2}$ \and Zden\v{e}k Ryj\'a\v{c}ek$^{1,2}$ 
        \and Petr Vr\'ana$^{1,2}$}
\date{\version}
\newcounter{mathitem}
  {\begin{list}{{$(\roman{mathitem})$}}{
   \setcounter{mathitem}{0}
   \usecounter{mathitem}
   \setlength{\topsep}{0pt plus 2pt minus 0pt}
   \setlength{\parskip}{0pt plus 2pt minus 0pt}
   \setlength{\partopsep}{0pt plus 2pt minus 0pt}
   \setlength{\parsep}{0pt plus 2pt minus 0pt}
   \setlength{\leftmargin}{35pt}
   \setlength{\itemsep}{0pt plus 2pt minus 0pt}}}
  {\end{list}}
\newcounter{mathites}
  {\begin{list}{{$(\roman{mathites})$}}{
   \setcounter{mathites}{0}
   \usecounter{mathites}
   \setlength{\topsep}{0pt plus 2pt minus 0pt}
   \setlength{\parskip}{0pt plus 2pt minus 0pt}
   \setlength{\partopsep}{0pt plus 2pt minus 0pt}
   \setlength{\parsep}{0pt plus 2pt minus 0pt}
   \setlength{\leftmargin}{22pt}
   \setlength{\itemsep}{0pt plus 2pt minus 0pt}}}
  {\end{list}}
\newcounter{mylist}
  {\begin{list}{{$\roman{mylist}$}}{
   \setlength{\topsep}{7pt plus 2pt minus 0pt}
   \setlength{\parsep}{3pt plus 2pt minus 0pt}
   \setlength{\leftmargin}{12pt}
   \setlength{\labelwidth}{-6pt}
   }
   }
  {\end{list}}
\newcounter{prostredi}
\def\theprostredi{\arabic{prostredi}}
\def\vejde#1{\unskip
\nobreak\hfill\penalty50\hskip1em\hbox{}\nobreak\hfill
\hbox{#1}}
\newenvironment{theorem}{\par\bigskip\noindent%
\refstepcounter{prostredi}{\bf Theorem \theprostredi.}\quad\bgroup\sl }
{\egroup\par\bigskip\endtrivlist}%
\newenvironment{lemma}{\par\bigskip\noindent%
\refstepcounter{prostredi}{\bf Lemma \theprostredi.}\quad\bgroup\sl }
{\egroup\par\bigskip\endtrivlist}%
\newenvironment{proof}{\par
\noindent%
{\bf Proof.}\quad\bgroup}
{\egroup\vejde{\rule{2.5mm}{2.5mm}}\par\bigskip\endtrivlist}%
\newenvironment{proofbt}{\par
\noindent%
{\bf Proof}\bgroup}
{\egroup\vejde{\rule{2.5mm}{2.5mm}}\par\bigskip\endtrivlist}%
\newenvironment{corollary}{\par\bigskip\noindent%
\refstepcounter{prostredi}{\bf Corollary
\theprostredi.}\quad\bgroup\sl }
{\egroup\par\bigskip\endtrivlist}%
\newenvironment{conjecture}{\par\bigskip\noindent%
\refstepcounter{prostredi}{\bf Conjecture
\theprostredi.}\quad\bgroup\sl }
{\egroup\par\bigskip\endtrivlist}%
\newcounter{prostrclaim}
\def\theprostrclaim{\arabic{prostrclaim}}
\def\vejde#1{\unskip
\nobreak\hfill\penalty50\hskip1em\hbox{}\nobreak\hfill
\hbox{#1}}
\newenvironment{claim}{\par\bigskip\noindent%
\refstepcounter{prostrclaim}{\underline{\bf Claim \theprostrclaim.}}\quad\bgroup\sl }
{\egroup\par\bigskip\endtrivlist}%
\newenvironment{proofcl}{\par
\noindent%
{\underline{Proof.}}\quad\bgroup}
{\egroup\vejde{$\Box$}\par\bigskip\endtrivlist}%
\egroup\vejde{
$\Box$}\par\bigskip\endtrivlist}%
\newcounter{prostralph}
\def\theprostralph{\Alph{prostralph}}
\def\vejde#1{\unskip
\nobreak\hfill\penalty50\hskip1em\hbox{}\nobreak\hfill
\hbox{#1}}
\newenvironment{theoremAcite}[1]{\par\bigskip\noindent%
\refstepcounter{prostralph}{\bf Theorem
\theprostralph{} {#1}.}\quad\bgroup\sl }
{\egroup\par\bigskip\endtrivlist}%
\newenvironment{propositionAcite}[1]{\par\bigskip\noindent%
\refstepcounter{prostralph}{\bf Proposition
\theprostralph{} {#1}.}\quad\bgroup\sl }
{\egroup\par\bigskip\endtrivlist}%
\newenvironment{lemmaAcite}[1]{\par\bigskip\noindent%
\refstepcounter{prostralph}{\bf Lemma
\theprostralph{} {#1}.}\quad\bgroup\sl }
{\egroup\par\bigskip\endtrivlist}%
\newenvironment{conjectureAcite}[1]{\par\bigskip\noindent%
\refstepcounter{prostralph}{\bf Conjecture
\theprostralph{} {#1}.}\quad\bgroup\sl }
{\egroup\par\bigskip\endtrivlist}%
\newcommand{\sm}{\setminus}
\newcommand{\la}{\langle}
\newcommand{\lab}{\langle \{}
\newcommand{\ra}{\rangle}
\newcommand{\rag}{\rangle _G}
\newcommand{\rab}{\} \rangle}
\newcommand{\Gstx}{G^{^*}_x}
\newcommand{\ragstx}{\rangle_{G^{^*}_x}}
\newcommand{\bp}{\beginpicture}
\newcommand{\ep}{\endpicture}
\newcommand{\bs}{\bigskip}
\newcommand{\bsm}{\vspace{-4mm}}
\newcommand{\ms}{\medskip}
\begin{document}
\maketitle

\footnotetext[1]{All three authors are affiliated with the Department of Mathematics; 
European Centre of Excellence NTIS - New Technologies for the Information 
Society, University of West Bohemia,  Univerzitn\'{\i}~8, 301 00 Pilsen, 
Czech Republic.
E-mails: {\tt $\{$kabela,ryjacek,vranap$\}$@kma.zcu.cz.}}

\footnotetext[2]{The research was supported by the project LO1506 of the Czech 
Ministry of Education, Youth and Sports, and by the project P202/12/G061 
of the Czech Science Foundation.}


\begin{abstract}
We continue studying Thomassen's conjecture
(every $4$-connected line graph has a Hamilton cycle)
in the direction of a recently shown equivalence
with Jackson's conjecture
(every $2$-connected claw-free graph has a Tutte cycle),
and we extend the equivalent formulation as follows: 
In every connected claw-free graph, any two vertices are connected by a maximal path 
which is a Tutte path.

\ms

Keywords: claw-free; hamiltonian; closure; Tutte path; Thomassen's conjecture 
\end{abstract}


\section{Introduction}
\label{s-introduction}


In 1956, Tutte~\cite{Tutte} showed that every $4$-connected planar graph 
has a Hamilton cycle. The main ingredient of the proof is to consider a 
cycle $C$ in a graph $G$ such that every component of $G-V(C)$ has at most 
three neighbors on $C$ (then, if $G$ is $4$-connected, $C$ becomes 
a~Hamilton cycle).
Since then, Tutte cycles and paths have been often used in the study of 
Hamilton properties of graphs.
For instance, Thomassen~\cite{ThomassenSurf} showed that
all $4$-connected planar graphs are, in fact, Hamilton-connected.
It was also shown that the result of Tutte extends
to projective-planar graphs~\cite{ThoYu},
that every $4$-connected toroidal graph has a Hamilton path~\cite{ThoYuZan},
and that every $5$-connected such graph is Hamilton-connected~\cite{KawOze}.

We study Tutte paths in claw-free graphs in the direction of recent results 
on Tutte cycles and paths by \v{C}ada et al.~\cite{CadaEtAl} and 
Li et al.~\cite{LiEtAl}, which investigate the following conjecture 
formulated by Thomassen~\cite{Thomassen} (note that, for better orientation, 
quoted known results and statements are labeled with letters).

%
%
\begin{conjectureAcite}{\cite{Thomassen}}
\label{c-thomassen} 
Every $4$-connected line graph has a Hamilton cycle.
\end{conjectureAcite}

Conjecture~\ref{c-thomassen} remains open,
even though partial results and numerous equivalent formulations of this problem 
are known (see e.g. the survey~\cite{BroRyjVra}).
In the positive direction, Zhan~\cite{Zhan} and independently 
Jackson~\cite{Jackson89}
showed that every $7$-connected line graph of a multigraph has a Hamilton cycle, 
and Kaiser and the third author~\cite{KaiVra} showed that every
$5$-connected line graph of minimum degree at least $6$ is
Hamilton-connected.

In~\cite{Ryjacek}, the second author introduced a closure operation on 
claw-free graphs (a recursive application of the local completion operation 
on particular vertices that turns a claw-free graph into a line graph 
while preserving Hamiltonicity),
and showed that Conjecture~\ref{c-thomassen} is equivalent to a seemingly 
stronger conjecture by Matthews and Sumner~\cite{MatSum} saying that
every $4$-connected claw-free graph has a Hamilton cycle.

Two recent equivalent formulations of Conjecture~\ref{c-thomassen}
restate the problem in terms of Tutte cycles and Tutte paths.
Conjecture~\ref{c-jackson} was stated by Jackson~\cite{Jackson} and
the equivalence was shown by \v{C}ada et al.~\cite{CadaEtAl}
(using the closure technique of~\cite{Ryjacek}).
Recently, Li et al.~\cite{LiEtAl} showed that
the problem can be equivalently formulated as Conjecture~\ref{c-li}.

%
%
\begin{conjectureAcite}{\cite{Jackson}}
\label{c-jackson} 
Every $2$-connected claw-free graph has a Tutte cycle.
\end{conjectureAcite}

\bsm

%
%
\begin{conjectureAcite}{\cite{LiEtAl}}
\label{c-li}
For every pair of vertices $a, b$ of a connected line graph of a hypergraph of 
rank at most $3$, there is a maximal $(a, b)$-path which is a Tutte path.
\end{conjectureAcite}

As the main result, we present the following reformulation of the problem
(seemingly stronger than Conjecture~\ref{c-jackson}),
and we show that they are, in fact, equivalent.

%
%
\begin{conjecture}
\label{c-main}  
For every pair of vertices $a, b$ of a connected claw-free graph,
there is a maximal $(a, b)$-path which is a Tutte path.
\end{conjecture}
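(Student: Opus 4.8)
The plan is to argue by induction on $|V(G)|$, where $G$ is a connected claw-free graph with prescribed vertices $a,b$. The base cases are immediate: if $G$ is a clique then any maximal $(a,b)$-path is Hamilton, and a Hamilton path is a Tutte path. For the inductive step the first move is to reduce from general claw-free graphs to line graphs via the closure of Ryj\'{a}\v{c}ek~\cite{Ryjacek}. I would establish the single-step lemma that local completion preserves the property in the needed direction: if $x$ is an eligible vertex (one whose neighbourhood $\langle N_G(x) \rangle_G$ induces a connected subgraph, so that $G^*_x$ is again claw-free) and $G^*_x$ admits a maximal $(a,b)$-path that is a Tutte path, then so does $G$. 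Iterating local completion over all eligible vertices replaces $G$ by its closure, which is the line graph of a triangle-free graph; it therefore suffices to prove the statement for line graphs, which connects the task to Conjecture~\ref{c-li}.

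Next I would pass to the preimage. Writing $G = L(H)$ with $H$ a multigraph, the vertices $a,b$ correspond to edges $e_a, e_b$ of $H$, and an $(a,b)$-path in $L(H)$ corresponds to a trail in $H$ from $e_a$ to $e_b$ (a sequence of distinct edges with consecutive ones sharing a vertex). Maximality of the path becomes maximality of the trail under insertion of further edges, and the Tutte condition that every component of $G - V(P)$ have at most $\min\{3,|V(P)|-1\}$ neighbours on $P$ translates into the requirement that the trail be dominating and that each connected piece formed by the off-trail edges attach to the vertex set of the trail in at most three places. The goal thus becomes the construction of a maximal dominating $(e_a,e_b)$-trail in $H$ whose off-trail pieces each have at most three attachment vertices.

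For the construction I would combine two reductions. When $H$ has a cut vertex or a narrow edge cut, I would split $H$ along the separator, apply the induction hypothesis to the pieces, and splice the resulting trails, verifying that maximality and the three-attachment bound survive the splicing; the claw-freeness forces the local structure in $G$ at such a separator to be a clique, which keeps the bookkeeping finite and controllable. On the remaining highly connected core I would apply Catlin's reduction, contracting maximal collapsible subgraphs: since contracting a collapsible subgraph neither creates nor destroys dominating trails through it, this replaces $H$ by its reduced graph $H'$, and it remains to produce the trail in $H'$ and lift it back, possibly after recording that some contractions force $H'$ to carry hyperedges of rank at most $3$, exactly the setting of Conjecture~\ref{c-li}.

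The hard part will be the reduced, low-connectivity core. There no further collapsing is available and the claw-free (equivalently, rank at most $3$) structure must be exploited directly to cap at three the number of attachment vertices of each off-trail piece, while simultaneously keeping the trail maximal. This is precisely the content that the equivalence with Conjecture~\ref{c-jackson} and Conjecture~\ref{c-li} shows to be interchangeable with Thomassen's Conjecture~\ref{c-thomassen}, so the genuine difficulty is concentrated in this final case rather than in any of the reductions above; a complete proof would require a direct structural argument producing, in every reduced claw-free graph and for every prescribed pair $a,b$, a maximal Tutte $(a,b)$-path by controlling these attachments, and it is exactly this step that I expect to be the main obstacle.
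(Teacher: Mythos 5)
You are attempting to prove Conjecture~\ref{c-main}, which the paper itself does not prove and could not: Theorem~\ref{t-eqivalence} shows it is \emph{equivalent} to Thomassen's Conjecture~\ref{c-thomassen}, which is open. Your final paragraph in effect concedes this --- the ``reduced core'' you defer is the whole problem --- so what you have is at best a sketch of a reduction to Conjecture~\ref{c-li}, i.e., a sketch of one direction of the equivalence that is the paper's actual content (Theorem~\ref{t-eqivalence} via Theorem~\ref{t-closure}). A review should therefore judge the proposal as a reduction, not as a proof.

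Even as a reduction there is a concrete gap at the very first step. Your ``single-step lemma'' --- that local completion at any vertex $x$ with connected $\langle N_G(x)\rangle_G$ preserves, in the downward direction, the existence of a maximal Tutte $(a,b)$-path --- is unjustified, and it is exactly the delicate point. Stability of maximal $(a,b)$-paths under local completion can fail when $\langle N_G(x)\rangle_G$ is only $2$-connected; the precise characterization of when it fails is Corollary~\ref{c-maximal} (adapted from~\cite{RyjVraStability}), and the paper sidesteps the issue by defining the Tutte-closure so that each completion step preserves non-Tutte-connectedness \emph{by construction}, paying for this with the hard structural Theorem~\ref{t-closure}: the closed graph is a line graph of a hypergraph of rank at most $3$, not (as your Ryj\'{a}\v{c}ek-closure route \cite{Ryjacek} would give) a line graph of a triangle-free graph. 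Indeed, the Tutte-closure may retain a vertex whose neighbourhood is non-complete and $2$-connected --- the analysis in Section~\ref{s-proof} (via Lemma~\ref{l-non-adjacent} and a bespoke extension of the covering from Lemma~\ref{l-cover}) shows there is at most one such vertex and it forces three cliques at some vertices --- which is why the correct endpoint is the hypergraph statement of Conjecture~\ref{c-li} rather than a line-graph statement; the paper also notes that claw-free graphs and line graphs of rank-$3$ hypergraphs are incomparable classes. Two further inaccuracies: a Tutte path does \emph{not} translate to a dominating trail (components of $G-V(P)$ may contain edges; only their attachments are capped at three), and Catlin's collapsibility is tailored to spanning or dominating closed trails --- it is not known to respect either the three-attachment condition or maximality of open trails, so that step of your plan would itself require proof rather than citation.
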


\bsm

%
%
\begin{theorem}
\label{t-eqivalence}
Conjectures~\ref{c-thomassen} and~\ref{c-main} are equivalent.
\end{theorem}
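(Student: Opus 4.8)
The plan is to prove the equivalence by establishing two implications, exploiting the chain of known equivalences recalled in the introduction. Since Conjecture~\ref{c-thomassen} is already known to be equivalent to Conjecture~\ref{c-jackson} (via \cite{CadaEtAl}) and to Conjecture~\ref{c-li} (via \cite{LiEtAl}), it suffices to show that Conjecture~\ref{c-main} is equivalent to one of these, say Conjecture~\ref{c-jackson}. The direction from Conjecture~\ref{c-main} to Conjecture~\ref{c-jackson} should be the routine one: given a $2$-connected claw-free graph, I would pick a suitable pair of vertices $a,b$, invoke Conjecture~\ref{c-main} to obtain a maximal $(a,b)$-path that is a Tutte path, and then argue that $2$-connectivity lets us close this path into a Tutte cycle (using the two internally disjoint connections guaranteed by Menger's theorem, and checking the neighbour-count condition is preserved). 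The genuine content lies in the reverse direction, deriving the path statement of Conjecture~\ref{c-main} from the cycle statement.

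For the reverse direction I would assume Conjecture~\ref{c-jackson} (equivalently Conjecture~\ref{c-thomassen}) and aim to produce, for an arbitrary connected claw-free graph $G$ and vertices $a,b$, a maximal $(a,b)$-path that is a Tutte path. The natural device is the local completion operation $G^*_x$ and Ryj\'{a}\v{c}ek's closure recalled in the excerpt: passing to the closure converts $G$ into a line graph while controlling how Tutte paths and Hamilton-type structures transfer back and forth. I expect the argument to route through the line-graph / hypergraph picture of Conjecture~\ref{c-li}, since that conjecture is already phrased in terms of maximal $(a,b)$-paths that are Tutte paths and is known equivalent to Conjecture~\ref{c-thomassen}. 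Concretely, the plan is to reduce a claw-free $G$ to a line graph of a hypergraph of rank at most $3$ (via closure), apply Conjecture~\ref{c-li} there to get the desired path, and then pull the path back through the closure operations to $G$.

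The main obstacle, and the step I expect to demand the most care, is controlling the behaviour of \emph{maximal} $(a,b)$-paths and the Tutte condition under the local completion operation $x \mapsto G^*_x$. Local completion adds edges inside $N_G(x)$, which can both create new $(a,b)$-paths (possibly changing which paths are maximal) and alter the components of $G-V(P)$ together with their neighbourhoods on $P$. I would therefore need a lemma asserting that a maximal $(a,b)$-Tutte path in $G^*_x$ can be transformed into a maximal $(a,b)$-Tutte path in $G$ (and vice versa), handling the cases where the path uses one of the newly added edges by rerouting through $x$ while preserving both maximality and the bound of $\min\{3,|V(P)|-1\}$ on neighbours of each component. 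The claw-freeness is what makes such rerouting possible: around a vertex $x$ whose neighbourhood would otherwise split badly, the induced-$K_{1,3}$-free structure forces enough adjacency to redirect the path locally. Managing the small-path boundary cases in the definition of Tutte path (where $|V(P)|-1 < 3$) and ensuring that maximality is genuinely preserved, rather than merely the Tutte property, is where the delicate bookkeeping will concentrate.
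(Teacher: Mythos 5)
Your high-level plan for the hard direction (reduce a claw-free graph to a line graph of a hypergraph of rank at most $3$ via a closure, apply Conjecture~\ref{c-li} there, and pull the path back) matches the paper's architecture, but the lemma you propose to make it work is precisely the step that is \emph{not} available, and the paper's entire construction is designed to avoid needing it. You want a stability statement: for any vertex $x$ eligible for local completion, a maximal $(a,b)$-path that is a Tutte path in $G^*_x$ can be transformed into one in $G$, and vice versa. No such statement is known, and Corollary~\ref{c-maximal} (which the paper adapts from~\cite{BollobasEtAl,RyjVraStability}) shows exactly why: even the weaker property of preserving the \emph{vertex set} of a maximal $(a,b)$-path under a single local completion at a vertex with $2$-connected neighbourhood can fail, and the corollary characterizes the failure rather than excluding it. The Tutte condition is a global constraint on all components of $G-V(P)$, and local rerouting inside $N_G(x)$ justified by claw-freeness does not control it. The paper sidesteps this by defining the \emph{Tutte-closure}: local completion is performed at a vertex only if the resulting graph \emph{remains not Tutte-connected}, so preservation of (non-)Tutte-connectedness holds by fiat. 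The price is that this closure may get stuck long before reaching a line graph, and the genuine content of the paper is Theorem~\ref{t-closure}: any stuck graph is nevertheless a line graph of a hypergraph of rank at most $3$. Proving that requires the structural analysis of Section~\ref{s-proof} (at most one vertex of $G^T$ has a non-complete $2$-connected neighbourhood, Claims~\ref{c-1}--\ref{c-5}, and the clique covering of Lemma~\ref{l-cover}), none of which appears in your outline. So your proposal, as written, rests on an unproved (and likely false in the generality you need) transfer lemma.

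The direction you call routine also has a gap, though a fixable one. Deriving Conjecture~\ref{c-jackson} from Conjecture~\ref{c-main} by ``closing'' a maximal $(a,b)$-Tutte path into a Tutte cycle via Menger's theorem is not routine: if $a$ and $b$ are non-adjacent, the connecting paths add new vertices to the cycle, which changes the components of $G$ minus the cycle and can destroy the three-neighbour bound; and even for adjacent $a,b$ one must handle the $|V(C)|\geq 4$ clause in the definition of a Tutte cycle. The paper avoids all of this by going straight to Conjecture~\ref{c-thomassen}: in a $4$-connected line graph, pick \emph{adjacent} $a,b$; any Tutte path is then a Hamilton path (a component of $G-V(P)$ would yield a cut of size at most $3$), and adding the edge $ab$ gives a Hamilton cycle. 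You should replace your Jackson-closing argument with this observation.
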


For the proof of Theorem~\ref{t-eqivalence},
we introduce a Tutte-closure operation (for details, see 
Section~\ref{s-preliminaries}), 
preserving the property that every pair 
of vertices of a graph is joined by a maximal path which is a Tutte path
(a similar concept can be found in~\cite{KuzRyjTes});
and we show the following.

%
%
\begin{theorem}
\label{t-closure}
A Tutte-closure of a claw-free graph
is the line graph of a hypergraph of rank at most $3$.
\end{theorem}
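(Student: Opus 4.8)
The plan is to reduce the statement to a Krausz-type clique-cover condition and then to read off such a cover from the local structure that the Tutte-closure forces. Recall first the standard fact that a graph $\Gamma$ is a line graph of a hypergraph of rank at most $3$ if and only if there is a family of cliques of $\Gamma$ covering all of its edges such that every vertex of $\Gamma$ lies in at most three of these cliques; the hypergraph $H$ with line graph $\Gamma$ is recovered by taking the covering cliques as the ground set and letting the edge corresponding to a vertex $v$ be the set of cliques containing $v$. Thus it suffices to produce, for $\Gamma = G^T$, a clique cover in which no vertex is used more than three times. Before doing so, I would record that $G^T$ is claw-free: a local completion only adds edges, and since an induced $K_{1,3}$ requires three pairwise non-adjacent leaves, a short case analysis (handling the case where a leaf lies in the completed neighbourhood) shows that no local completion can create one; hence claw-freeness is preserved throughout the closure process. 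The case in which $G$ is Tutte-connected is immediate, since then $G^T$ is complete and hence the line graph of a star.

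Next I would carry out the local analysis at each vertex $x$ of $G^T$. Since $G^T$ is claw-free, $\langle N_{G^T}(x)\rangle_{G^T}$ has independence number at most $2$, which already restricts it severely. The goal is to show that the edges incident with $x$ can be distributed among at most three cliques, consistently with the choices made at the other vertices, so that the local pictures assemble into a single global cover of the required kind. The engine driving this is the maximality property of the closure recorded in the excerpt: for every vertex $x$ whose neighbourhood in $G^T$ is non-complete, the graph $(G^T)^*_x$ is Tutte-connected while $G^T$ is not. I would use this to argue that each non-complete neighbourhood has one of a short list of \emph{admissible} shapes (a union of at most three cliques, bounded in the way demanded by the cover), by showing that any neighbourhood outside this list contains a vertex at which a further local completion could have been performed without restoring Tutte-connectedness, contradicting the maximality of $G^T$.

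The main obstacle is exactly this last implication, that is, translating a local failure of the clique-cover condition into the survival of non-Tutte-connectedness under a local completion. Concretely, assuming $G^T$ is not a line graph of a hypergraph of rank at most $3$, one obtains from the clique-cover characterization a vertex $x$ together with a forbidden local configuration in $\langle N_{G^T}(x)\rangle_{G^T}$; the difficulty is to produce a pair of vertices $a,b$ and a component of $G^T$ minus some maximal $(a,b)$-path having at least four neighbours on that path, and to make this witness robust against the new edges created by the local completion at $x$. Since adding edges inside $N_{G^T}(x)$ can create new $(a,b)$-paths, the delicate point is to choose the witness so that the offending component and its four attachments lie essentially outside $N_{G^T}(x)$, so that no maximal $(a,b)$-path in $(G^T)^*_x$ can absorb them. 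Establishing this robustness — and thereby contradicting the maximality of the closure — is where the real work lies; once it is in place, the admissible neighbourhoods glue together routinely into the desired clique cover, completing the proof.
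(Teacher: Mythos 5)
Your opening reduction --- the Krausz-type characterization (Theorem~\ref{t-hypergraphs} with $r=3$), preservation of claw-freeness under local completion, and the trivial Tutte-connected case --- matches the paper. Everything after that is a programme rather than a proof, and the two steps you leave open are exactly where the content of the theorem lies. First, the ``engine'': you want to show that any vertex whose neighbourhood fails your admissibility condition permits a further local completion after which the graph is still not Tutte-connected, contradicting maximality of the closure. Proving this implication (equivalently, its contrapositive) is precisely what requires the stability theory of maximal $(a,b)$-paths under local completion, namely Propositions~\ref{p-long} and~\ref{p-longest}, which the paper adapts as Corollary~\ref{c-maximal}; you name this as the main obstacle but offer no way past it. Note also that the paper's use of maximality runs in the opposite direction to yours: it does not refute the existence of a vertex $x$ with non-complete $2$-connected neighbourhood (such a vertex can genuinely survive in $G^T$, and the paper keeps it rather than eliminating it). Instead, taking the pair $a,b$ witnessing that $G^T$ is not Tutte-connected and a maximal Tutte $(a,b)$-path in the Tutte-connected graph $(G^T)^*_x$, Corollary~\ref{c-maximal} forces $\{a,b\}$ to be a cut in $\langle N_{G^T}(x)\rangle_{G^T}$; a sequence of claw-freeness arguments built on Lemma~\ref{l-non-adjacent} then shows that there is at most \emph{one} such vertex $x$, and the cover is constructed around it.

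Second, your closing claim that the admissible local pictures ``glue together routinely into the desired clique cover'' is false as stated: the condition in Theorem~\ref{t-hypergraphs} is global, and a graph in which every neighbourhood is a union of at most three cliques need not admit a single covering system in which every vertex lies in at most three cliques. In the paper, once the unique special vertex $x$ is removed, the remaining graph is $2$-closed, and covering it is the content of Lemma~\ref{l-cover}; its proof needs the Bermond--Meyer characterization of line graphs of multigraphs (Theorem~\ref{t-multigraphs}), the forbidden-subgraph structure of $2$-closed claw-free graphs (Lemma~\ref{l-forbidden}), and the good-walk machinery (Lemmas~\ref{l-path}--\ref{l-degree}), exactly because the clique choices must be made consistently along arbitrarily long chains of induced copies of $G_2$ and $G_4$. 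Re-attaching $x$ is not automatic either: one must verify that the cliques covering the edges at $x$ are \emph{maximal} cliques of $G^T-x$, so that, by the second assertion of Lemma~\ref{l-cover}, enlarging them by $x$ (or adding a small new clique at $b$) never places a vertex in four cliques; this occupies the final claims of the paper's proof. In short, you identified the right characterization to aim for, but both the local structure theorem and the global assembly --- the actual substance of the proof --- are missing from the proposal.
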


In relation to Conjectures~\ref{c-li} and \ref{c-main}, we  note that there 
are infinitely many claw-free graphs which are not line 
graphs of hypergraphs of rank at most $3$, and vice versa (this follows e.g. 
from \cite[Theorem~1]{JavMalOmo}, and the vice versa is trivial).

\ms

In Section~\ref{s-preliminaries}, we show that Theorem~\ref{t-eqivalence}
follows from Theorem~\ref{t-closure} (at the beginning of 
Section~\ref{s-preliminaries}, we recall the concepts used). 
As the main ingredients for proving Theorem~\ref{t-closure}, we use two 
characterizations of line graphs of multigraphs (discussed
in Section~\ref{s-line graphs}) in combination with properties of 2-closed 
claw-free graphs (Section~\ref{s-2-closed}) and properties of maximal 
$(a,b$)-paths (Section~\ref{s-paths}). 
The proof of Theorem~\ref{t-closure} is given in Section~\ref{s-proof}.


\section{Preliminaries}
\label{s-preliminaries}


We first recall definitions of the concepts used (some notation will be recalled
later in the text as needed). For additional concepts and notation, we refer the
reader to \cite{BonMur}.

A graph is said to be \emph{claw-free} if it contains no copy of the graph 
$K_{1,3}$ as an induced subgraph.
A \emph{Hamilton path} (\emph{Hamilton cycle}) in a graph is a path (cycle) 
containing all its vertices.
Considering a path $P$ in a graph $G$,
we say that $P$ is a \emph{Tutte path}
if every component of $G-V(P)$ has at most $\min\{3,|V(P)|-1\}$ neighbors on $P$.
In particular, a Hamilton path is a Tutte path.
Similarly, a cycle $C$ is a \emph{Tutte cycle} 
if $C$ is a Hamilton cycle or
$|V(C)| \geq 4$ and every component of $G-V(C)$ has at most three neighbors on~$C$.

We recall that an \emph{$(a,b)$-path} is a path from vertex $a$ to vertex $b$, 
and we say that $P$ is a \emph{maximal $(a,b)$-path}
if there is no $(a,b)$-path $P'$ such that $V(P)$ is a proper subset of $V(P')$.
We say that a graph is \emph{Tutte-connected} (\emph{Hamilton-connected})
if for every pair of its vertices $a, b$,
there is a maximal $(a, b)$-path which is a Tutte path
(an $(a, b)$-path which is a Hamilton path), respectively
(thus, if $G$ is not Tutte-connected,
then there is a pair of vertices $a, b$ such that
no maximal $(a,b)$-path in $G^T$ is a Tutte path).
%

For $x\in V(G)$, $N_G(x)$ denotes the {\em neihgborhood of $x$ in $G$}, i.e. 
the set of all vertices of $G$ that are adjacent to $x$,
$N_G[x]$ denotes the set $N_G(x) \cup \{x\}$, and for a set $A\subset V(G)$,
$\langle A \rangle_{G}$ denotes the subgraph of $G$ induced by $A$.
The \emph{local completion} of a graph $G$ at a vertex $x\in V(G)$ is the 
graph $\Gstx$ obtained from $G$ by adding all possible edges among vertices 
of $N_G(x)$.
In~particular, the graph $\langle N_{\Gstx}(x)\rangle_{\Gstx}$ is a 
\emph{clique}, meaning a complete subgraph (not necessarily maximal).  
The edges in $E(\la N_{\Gstx}(x)\ragstx)\sm E(\la N_G(x)\rag)$ will be 
sometimes referred to as {\em new edges}.

We define a {\em Tutte-closure} of a graph $G$ to be a graph $G^T$ obtained 
as follows.
If $G$ is Tutte-connected, then we let $G^T$
be the complete graph obtained by adding all possible edges.
Otherwise, we recursively perform the
local completion operation at vertices for which the resulting graph
remains not Tutte-connected, as long as there is at least one such vertex;
and we let $G^T$ be a resulting graph.
We say that $G^T$ is a Tutte-closure of $G$.
(More precisely, there is a sequence of graphs $G_0,G_1,\ldots,G_k$
such that $G_0=G$, $G_{i+1}=(G_i)^{^*}_{x_i}$ for some $x_i\in V(G_i)$ such 
that $(G_i)^{^*}_{x_i}$ is still not Tutte-connected, and $(G_k)^{^*}_{x_k}$ 
is Tutte-connected for any nonsimplicial vertex $x_k\in V(G_k)$, and we set 
$G_k=G^T$). 

Note that it can happen that, for some two vertices $x,y$ of a 
non-Tutte-connected graph $G$, neither $(G)^{^*}_{x}$ nor $(G)^{^*}_{y}$
is Tutte-connected, while the graph $((G)^{^*}_{x})^{^*}_y $ is. 
Such situations indicate that a Tutte-closure of a non-Tutte-connected graph is 
not uniquely determined.

Although the main result itself deals with graphs with neither loops nor 
multiple edges, we will need line graphs of multigraphs and hypergraphs for 
its proof.
In accordance with~\cite{LiEtAl},
we define a \emph{hypergraph} as a collection of subsets (edges) of a ground set.
The \emph{rank} of a hypergraph is the size of its maximum edge.
(In~particular, multiple edges are allowed
and \emph{multigraphs} are exactly hypergraphs of rank at most $2$.)
The \emph{line graph} of a graph (of a multigraph, of a hypergraph) $H$
is the graph whose vertex set is $E(H)$, and two of its vertices are adjacent
if and only if the corresponding edges in $H$ have a vertex in common
(if we speak only about a line graph, we mean the line graph of a graph.)

\ms

We conclude this section by showing that Theorem~\ref{t-closure} implies 
Theorem~\ref{t-eqivalence}.

\bs

\begin{proofbt} {\bf of Theorem~\ref{t-eqivalence}.} \quad
First, suppose that Conjecture~\ref{c-li} is true.
We consider a connected claw-free graph $G$ and its Tutte-closure $G^T$.
Clearly, $G^T$ is connected.
By Theorem~\ref{t-closure}, $G^T$ is a line graph of a hypergraph of rank at 
most $3$.
By the assumption that Conjecture~\ref{c-li} is true, $G^T$ is 
Tutte-connected, and so is $G$ (since the closure preserves the 
property), and thus Conjecture~\ref{c-main} is true.

Conversely, suppose that Conjecture~\ref{c-main} is true.
We consider a $4$-connected line graph $G$
and an adjacent pair of its vertices, say, $a, b$.
Clearly, $G$ is claw-free.
We note that the $(a,b)$-path given by Conjecture~\ref{c-main}
is, in fact, a Hamilton path.
We extend this path by adding the edge $ab$,
and we conclude that Conjecture~\ref{c-thomassen} is true.
\end{proofbt}


\section{Line graphs of multigraphs (hypergraphs)}
\label{s-line graphs}


An important ingredient of the argument is using two different
characterizations of line graphs of multigraphs.
The first characterization, due to Bermond and Meyer~\cite{BerMey},
describes the class in terms of forbidden induced subgraphs.
%

%
%
\begin{theoremAcite}{\cite{BerMey}}
\label{t-multigraphs} 
For $i=1, 2, \dots, 7$, let $G_i$ be the graph in Figure~\ref{f-subgraphs}.
A~graph is a line graph of a multigraph if and only if it is
$\{G_1, G_2, \dots, G_7\}$-free.
\end{theoremAcite}

%
%
\begin{figure}[ht]
\bsm
$$\bp
\setcoordinatesystem units <0.75mm,0.75mm>
\setplotarea x from -50 to 50, y from 0 to 5
\put{
\bp
\setcoordinatesystem units <0.6mm,.5mm>
\setplotarea x from -20 to 20, y from -20 to 15
\put{$\bullet$} at -10  15
\put{$\bullet$} at  10  15
\put{$\bullet$} at   0   0
\put{$\bullet$} at   0 -15
\plot -10 15  0 0  10 15 /
\plot 0 0  0 -15 /
\put{$G_1$} at  0 -24
\ep} at -95 -1
\put{
\bp
\setcoordinatesystem units <0.6mm,.5mm>
\setplotarea x from -20 to 20, y from -20 to 15
\put{$\bullet$} at -10  15
\put{$\bullet$} at  10  15
\put{$\bullet$} at -10   0
\put{$\bullet$} at  10   0
\put{$\bullet$} at -10 -15
\put{$\bullet$} at  10 -15
\plot -10 -15 -10 15 10 15 10 -15 -10 -15 10 0 -10 0 10 15 /
\put{$G_2$} at  0 -24 
\ep} at -72 -1
\put{
\bp
\setcoordinatesystem units <0.5mm,.5mm>
\setplotarea x from -20 to 20, y from -20 to 15
\put{$\bullet$} at   0  15
\put{$\bullet$} at   0   0
\put{$\bullet$} at -20   0
\put{$\bullet$} at  20   0
\put{$\bullet$} at -10 -15
\put{$\bullet$} at  10 -15
\plot 0 15 -20 0 -10 -15 10 -15 20 0 0 15 0 0 -10 -15 /
\plot -20 0 20 0 /
\plot 0 0  10 -15 /
\put{$G_3$} at 0 -24
\ep} at -45 -1
\put{
\bp
\setcoordinatesystem units <0.4mm,.55mm>
\setplotarea x from -20 to 20, y from -20 to 20
\put{$\bullet$} at -25   0
\put{$\bullet$} at  -5   0
\put{$\bullet$} at   5   0
\put{$\bullet$} at  25   0
\put{$\bullet$} at   5  15
\put{$\bullet$} at  -5 -15
\plot -25 0 25 0  5 15 -25 0  -5 -15 5 0  5 15  -5 0 /
\plot -5 -15  25 0 /
\put{$G_4$} at 0 -22
\ep} at  -12 0
\put{
\bp
\setcoordinatesystem units <0.45mm,.6mm>
\setplotarea x from -20 to 20, y from -20 to 20
\put{$\bullet$} at   0  15
\put{$\bullet$} at   0 -15
\put{$\bullet$} at -25   0
\put{$\bullet$} at -10   0
\put{$\bullet$} at  10   0
\put{$\bullet$} at  25   0
\plot -25 0  25 0  0 15 -25 0  0 -15 -10 0  0 15  10 0  0 -15 25 0 /
\put{$G_5$} at 0 -21
\ep} at 23 0
\put{
\bp
\setcoordinatesystem units <0.5mm,.4mm>
\setplotarea x from -20 to 20, y from -20 to 20
\put{$\bullet$} at   5  20
\put{$\bullet$} at   0   0
\put{$\bullet$} at  10   0
\put{$\bullet$} at   5  -5
\put{$\bullet$} at   5 -12
\put{$\bullet$} at -20 -25
\put{$\bullet$} at  30 -25
\plot 5 20 -20 -25  30 -25  5 20  0 0  -20 -25  5 -12  30 -25  10 0
    5 20  5 -12  0 0  10 0  5 -12 /
\plot -20 -25  5 -5  0 0 /
\plot 30 -25  5 -5  10 0 /
\put{$G_6$} at 0 -36
\ep} at   55 -1
\put{
\bp
\setcoordinatesystem units <0.6mm,.5mm>
\setplotarea x from -20 to 20, y from -20 to 20
\put{$\bullet$} at   0  20
\put{$\bullet$} at   0   5
\put{$\bullet$} at -20  -5
\put{$\bullet$} at  20  -5
\put{$\bullet$} at  -5  -5
\put{$\bullet$} at   5  -5
\put{$\bullet$} at  0  -15
\plot 0 20 -20 -5  20 -5  0 20  0 -15 -20 -5  0 5  20 -5
   0 -15  -5 -5  0 5  5 -5  0 -15 /
\setquadratic
\plot -20 -5  0 -21  20 -5 /
\setlinear
\put{$G_7$} at 0  -27
\ep} at   90 0
\ep$$
\bsm\bsm
\caption{Forbidden induced subgraphs for line graphs of multigraphs}
\label{f-subgraphs}
\end{figure}

The second characterization, due to Krausz~\cite{Krausz}, describes line 
graphs of hypergraphs and of multigraphs in terms of clique coverings.
Viewing multigraphs as hypergraphs of rank at most $2$,
we will use the following generalization of Krausz's characterization 
(see~\cite{Berge, SkuSuzTys}).

%
%
\begin{theoremAcite}{\cite{Berge,SkuSuzTys}}
\label{t-hypergraphs} 
For every positive integer $r$,
a graph $G$ is a line graph of a hypergraph of rank at most $r$ if and only if
$E(G)$ can be covered by a system $\mathcal{K}$ of cliques
such that every vertex of $G$ belongs to at most $r$ cliques of $\mathcal{K}$.
\end{theoremAcite}

The basic idea of the proof of Theorem~\ref{t-closure}
is to find a suitable clique covering of a Tutte-closure
and to use the case $r = 3$ of Theorem~\ref{t-hypergraphs} (for hypergraphs 
of rank at most $3$).
In finding such covering, we will also use the case $r = 2$ (for multigraphs).

Considering line graphs of hypergraphs of rank at most~$r$,
we remark that the terminology is not unified; these graphs are also referred 
to as graphs of $\infty$-Krausz dimension at most $r$~\cite{GleMatSku},
 as $r$-set representations~\cite{PolRodTur},
as graphs with clique cover number~$r$~\cite{JavMalOmo},
or as edge intersection graphs of $r$-uniform hypergraphs~\cite{SkuSuzTys}.
For an overview of the terminology used and for more details,
we refer the reader to~\cite{JavMalOmo, SkuSuzTys}.


\section{On 2-closed claw-free graphs}
\label{s-2-closed}


In this section, we note that a Tutte-closure of a claw-free graph
is, in fact, claw-free.
In addition, we suppose that it is $2$-closed
(the definition is to be found below),
and we construct a desired clique covering.
To this end, we will use several lemmas on $2$-closed claw-free graphs
(proved by the second and third authors in~\cite{RyjVraLG}).

We note the following
(similar statements are shown, for instance, in~\cite{Ryjacek, BroRyj}).

%
%
\begin{lemma}
\label{l-free}
Let $G$ be a $K_{1,k}$-free graph (where $k \geq 3$).
For every vertex $x\in V(G)$, the graph $\Gstx$ is $K_{1,k}$-free.  
\end{lemma}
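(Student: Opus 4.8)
The plan is to argue by contradiction: assuming $G^*_x$ contains an induced $K_{1,k}$, I will produce an induced $K_{1,k}$ in $G$ itself. The starting point is the elementary structural observation that $G^*_x$ differs from $G$ only by edges added inside $N_G(x)$. In particular $E(G) \subseteq E(G^*_x)$, the neighbourhood is unchanged in the sense that $N_{G^*_x}(x) = N_G(x)$, and an edge of $G^*_x$ fails to be an edge of $G$ only when both of its endpoints lie in $N_G(x)$.

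So suppose $\langle \{c, v_1, \dots, v_k\} \rangle_{G^*_x}$ is an induced $K_{1,k}$ with centre $c$ and pairwise non-adjacent leaves $v_1, \dots, v_k$. Since $E(G) \subseteq E(G^*_x)$, the leaves remain pairwise non-adjacent in $G$; and since any two vertices of $N_G(x)$ are adjacent in $G^*_x$ (they form a clique there), \emph{at most one} leaf lies in $N_G(x)$. This already forces $c \neq x$, as otherwise every leaf would be a neighbour of $x$, hence all in $N_G(x)$, contradicting $k \geq 2$. I then split into cases according to whether a star edge can be new. If no leaf lies in $N_G(x)$, or if $c \notin N_G(x)$, then no edge $c v_i$ has both endpoints in $N_G(x)$, so every star edge already belongs to $G$ and $\langle \{c, v_1, \dots, v_k\} \rangle_G$ is the desired induced $K_{1,k}$. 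The only remaining case has $c \in N_G(x)$ together with exactly one leaf, say $v_1$, in $N_G(x)$, so that the edge $c v_1$ may be new while each $c v_j$ with $j \geq 2$ lies in $G$. Here I reroute the star by replacing $v_1$ with $x$: since $c \in N_G(x)$ the edge $cx$ lies in $G$, since $v_2, \dots, v_k \notin N_G(x)$ the vertex $x$ is non-adjacent in $G$ to each of them, and the $v_j$ are pairwise non-adjacent; hence $\langle \{c, x, v_2, \dots, v_k\} \rangle_G$ is an induced $K_{1,k}$ in $G$, a contradiction. (The possibility that $x$ is itself one of the leaves is harmless, since $x \notin N_G(x)$ means $x$ is never counted among the leaves lying in $N_G(x)$, which places that situation in the first case.)

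The lemma is not deep, and the only point requiring genuine care is this last case, where a single star edge $cv_1$ may have been created by the local completion. The whole argument hinges on the observation that non-adjacency of the leaves bounds the number of leaves inside $N_G(x)$ by one; thus at most one star edge is new, and it can always be repaired by pivoting to $x$.
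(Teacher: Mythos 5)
Your proof is correct and takes essentially the same approach as the paper: the paper works with a maximum independent set $I$ of $\langle N_{G^*_x}(z)\rangle_{G^*_x}$ and replaces the unique vertex of $I \setminus N_G(z)$ by $x$ to obtain an equally large independent set in $\langle N_G(z)\rangle_G$, which is exactly your ``pivot to $x$'' step phrased in terms of neighbourhood independent sets instead of explicit stars. The key observation is identical in both: at most one leaf (independent vertex) can lie in $N_G(x)$, so at most one star edge is new and it can be repaired by substituting $x$.
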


\begin{proof}
We consider a vertex $z$ and a maximum independent set $I$ of 
$\langle N_{\Gstx}(z) \rangle_{\Gstx}$, and we show that 
$\langle N_G(z) \rangle_G$ contains an independent set of size $|I|$
(and thus $|I|< k$).
Clearly, $I$ is an independent set in $G$.
If $I\subset N_G(z)$, we are done, hence we can assume that there is a vertex, 
say $v$, of $I \sm N_G(z)$. We observe that $x$ is adjacent to $v$ and $z$ and 
to no vertex of $I \sm \{ v \}$, and thus $(I \sm \{ v \}) \cup \{ x \}$ is an 
independent set in $\langle N_G(z) \rangle_G$. 
\end{proof}

Following~\cite{BollobasEtAl}, we recall the concept of a $k$-closure.
For a positive integer $k$ and a graph $G$, the \emph{$k$-closure} of $G$
is the graph obtained from $G$ 
by recursively performing the local completion operation at
vertices whose neighborhood induces a non-complete $k$-connected
graph, as long as this is possible.
We say that $G$ is \emph{$k$-closed} if $G$ is isomorphic to its $k$-closure.
Note that it was shown in~\cite{BollobasEtAl} that the $k$-closure of a graph
is uniquely determined.

\ms

Comparing Theorem~\ref{t-multigraphs} to the following lemma,
we note a similarity in the structure of 
line graphs of multigraphs and $2$-closed claw-free graphs.

%
%
\begin{lemmaAcite}{\cite{RyjVraLG}}
\label{l-forbidden}
For $i=1, 3, 5, 6, 7$, let $G_i$ be the graph in Figure~\ref{f-subgraphs}.
Every $2$-closed claw-free graph is $\{G_1,G_3,G_5,G_6,G_7\}$-free.
\end{lemmaAcite}

In particular, every $\{G_2,G_4\}$-free induced subgraph of a $2$-closed 
claw-free graph is a line graph of a multigraph (hence can be covered by a 
system of cliques given by the case $r = 2$ of Theorem~\ref{t-hypergraphs}).
We view this subgraph as a starting point for the construction of a desired clique covering.

\ms

To manage the graphs $G_2$ and $G_4$, we use a concept of good walks (introduced in~\cite{RyjVraLG}).
Recall that a \emph{walk} is a sequence of vertices such that every two 
consecutive vertices are adjacent.
Considering a walk $J=u_0 u_1 \dots u_{k+1}$ in a graph $G$,
we say that $J$ is \emph{good} if the following conditions are satisfied:
\begin{itemize}
\item $k\geq 4$,
\item if $0\leq i \leq k-1$, then the vertices $u_i$ and $u_{i+2}$ are adjacent 
     in $G$,
\item if $0\leq i \leq k-4$, then the graph 
     $\la\{u_i,u_{i+1},\ldots,u_{i+5}\}\rag$ is a copy of either $G_2$ or $G_4$ 
     (see Fig.~\ref{f-subgraphs}). 
\end{itemize}
We say that a good walk $J$ is \emph{maximal} if there is no good walk
$J'$ in $G$ such that $J$ is a proper subwalk of $J'$
(that is, $J$ being a subsequence of $J'$ implies that $J$ is $J'$).

We recall that the \emph{square} of a graph $H$ is the graph on the same vertex 
set in which two vertices are adjacent if and only if their distance in $H$ is 
at most~$2$.
We will use the following properties of good walks (proved in~\cite{RyjVraLG}).

%
%
\begin{lemmaAcite}{\cite{RyjVraLG}}
\label{l-path}
Let $G$ be a connected $2$-closed claw-free graph that is not the square of a cycle.
If $J=u_0 u_1 \dots u_{k+1}$ is a good walk in $G$, then $u_1 u_2 \dots u_k$ is a path.
\end{lemmaAcite}

%
%
\begin{lemmaAcite}{\cite{RyjVraLG}}
\label{l-disjoint}
Let $G$ be a connected $2$-closed claw-free graph that is not the square of a cycle.
If $J=u_0 u_1 \dots u_{k+1}$ and $J=u'_0 u'_1 \dots u'_{k'+1}$
are maximal good walks in $G$ such that
$u_s=u'_t$, for some $1 \leq s \leq k$ and $1 \leq t \leq k'$,
then the following statements are satisfied:
\begin{enumerate}
\item[$(1)$] $\{u_1, u_2, \dots, u_k\} = \{u'_1, u'_2, \dots, u'_{k'} \}$,
\item[$(2)$] $k = k'$, and $u_i = u'_i$ or $u_i = u'_{k-i+1}$ for every $i = 1, 2, \dots, k$.
\end{enumerate}
\end{lemmaAcite}

%
%
\begin{lemmaAcite}{\cite{RyjVraLG}}
\label{l-clique}
Let $G$ be a connected 2-closed claw-free graph that is not the square of a cycle.
If $J=u_0 u_1 \dots u_{k+1}$ is a maximal good walk in $G$, then
$N_G[u_1] \setminus \{u_3\} = N_G[u_2] \setminus \{u_3,u_4\}$, 
and this set induces a clique in $G$.
\end{lemmaAcite}

%
%
\begin{lemmaAcite}{\cite{RyjVraLG}}
\label{l-degree}
Let $G$ be a connected 2-closed claw-free graph.
If $J=u_0 u_1 \dots u_{k+1}$ is a good walk in $G$ such that $k \geq 5$,
then for every $i=3,4,\dots,k-2$, $u_i$ has degree $4$ in $G$.
\end{lemmaAcite}

In addition, we recall the following lemma, also proved in~\cite{RyjVraLG}
(it will be used, besides this section, in Section~\ref{s-proof}).

%
%
\begin{lemmaAcite}{\cite{RyjVraLG}}
\label{l-non-adjacent}
Let $x$ be a vertex of a claw-free graph $G$. If there is a $2$-connected 
subgraph of $\langle N_G(x)\rangle_G$ containing two disjoint pairs of 
non-adjacent vertices, then $\langle N_G(x)\rangle_G$ is $2$-connected.
\end{lemmaAcite}

We combine the results discussed in 
Sections~\ref{s-line graphs}~and~\ref{s-2-closed},
and we obtain a system of cliques
covering a $2$-closed claw-free graph as follows.

%
%
\begin{lemma}
\label{l-cover}
If $G$ is a $2$-closed claw-free graph,
then $E(G)$ can be covered by a system $\mathcal{K}$ of cliques 
such that every vertex of $G$ belongs to at most three cliques of~$\mathcal{K}$.
Furthermore, if a vertex belongs to three cliques of $\mathcal{K}$, then
these are the only maximal cliques of $G$ containing this vertex.
\end{lemma}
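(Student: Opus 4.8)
The plan is to produce the covering by treating the obstructions to being a line graph of a multigraph separately from the rest of the graph. Since the structural lemmas on good walks (Lemmas~\ref{l-path}, \ref{l-disjoint}, and~\ref{l-clique}) presuppose that $G$ is not the square of a cycle, I would first dispose of that case directly. For the square of $C_n$ with $n \geq 7$ the maximal cliques are exactly the consecutive triangles $\{v_i, v_{i+1}, v_{i+2}\}$, each vertex lies in precisely three of them and in no further maximal clique, so these triangles form the desired system; the finitely many small squares, which are complete graphs or the octahedron $K_{2,2,2}$, are line graphs of multigraphs and hence covered with multiplicity at most two by the case $r=2$ of Theorem~\ref{t-hypergraphs}, making the furthermore clause vacuous.

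For the main case I assume $G$ is connected, $2$-closed, claw-free, and not the square of a cycle. I would first collect all maximal good walks and invoke Lemma~\ref{l-disjoint}: any two of them have either disjoint or identical interior vertex sets, so I may choose one representative per class and obtain a family $\mathcal{J}$ of maximal good walks whose interiors $\{u_1, \dots, u_k\}$ are pairwise disjoint and whose spines $u_1 \cdots u_k$ are paths by Lemma~\ref{l-path}. For each $J = u_0 u_1 \cdots u_{k+1}$ in $\mathcal{J}$ I put into $\mathcal{K}$ the two end cliques $A_J = N_G[u_1] \setminus \{u_3\}$ and $B_J = N_G[u_k] \setminus \{u_{k-2}\}$, which are cliques by Lemma~\ref{l-clique} (the latter after reversing the walk), together with the spine triangles $\{u_i, u_{i+1}, u_{i+2}\}$ for $i = 1, \dots, k-2$. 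A short check using $N_G[u_1] = A_J \cup \{u_3\}$ and $N_G[u_2] = A_J \cup \{u_3, u_4\}$ (Lemma~\ref{l-clique}) shows these cliques already cover every edge incident to $u_1, \dots, u_k$.

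I would then let $H$ be the subgraph of $G$ induced by deleting the vertices $u_1, \dots, u_k$ of every walk in $\mathcal{J}$. The key claim is that $H$ is $\{G_2, G_4\}$-free: any induced copy of $G_2$ or $G_4$ occurs as a six-vertex window of a good walk, hence of a maximal good walk, and the window structure forces one of its vertices to lie in the deleted interior. Since $H$ is $\{G_1, G_3, G_5, G_6, G_7\}$-free as an induced subgraph of $G$ by Lemma~\ref{l-forbidden}, Theorem~\ref{t-multigraphs} makes $H$ a line graph of a multigraph, and the case $r=2$ of Theorem~\ref{t-hypergraphs} supplies a clique cover $\mathcal{K}_H$ of $E(H)$ with multiplicity at most two. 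Adjoining $\mathcal{K}_H$ to the good-walk cliques covers all of $E(G)$, since every edge not incident to a deleted interior vertex is an edge of $H$.

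The main obstacle is the bookkeeping at the interface, namely proving the multiplicity bound three and the furthermore clause at the surviving boundary vertices $u_0, u_{k+1}$ and the external vertices of the end cliques. Inside a walk the count is transparent: a deep interior vertex has degree $4$ by Lemma~\ref{l-degree} with neighbourhood an induced path, so its only maximal cliques are the three spine triangles through it, and for $u_2$ the description $N_G[u_2] = A_J \cup \{u_3, u_4\}$ pins its maximal cliques down to $A_J$ and the two triangles $\{u_1,u_2,u_3\}$, $\{u_2,u_3,u_4\}$. The delicate points, which I expect to require the most work, are to show that a boundary or external vertex lying in an end clique consumes at most one clique of $\mathcal{K}_H$, that no such vertex lies in the end cliques of two distinct walks of $\mathcal{J}$, and that whenever the total reaches three the chosen cliques are genuinely maximal and exhaust the maximal cliques at that vertex; each of these rests on Lemma~\ref{l-clique}, which forces the neighbourhoods around walk ends to be essentially cliques, together with the maximality of the multigraph cover.
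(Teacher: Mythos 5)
Your overall strategy mirrors the paper's (dispose of squares of cycles, take a disjoint family of maximal good walks via Lemmas~\ref{l-path} and~\ref{l-disjoint}, cover the walk-free remainder via Lemma~\ref{l-forbidden}, Theorem~\ref{t-multigraphs} and the case $r=2$ of Theorem~\ref{t-hypergraphs}, then add the end cliques of Lemma~\ref{l-clique} and the spine triangles), but one design decision creates a genuine gap: you delete the \emph{whole} spine $u_1,\dots,u_k$ before covering the remainder, whereas the paper deletes only the interior $u_2,\dots,u_{k-1}$ and keeps the end-vertices $u_1,u_k$ in the graph $G'$ that it covers. This is not cosmetic. In the paper's setup, an external vertex $v$ adjacent to $u_1$ still has the edge $vu_1$ in $G'$, so one of the at most two cliques of the multigraph cover $\mathcal{K}_m$ at $v$ must contain $u_1$; the paper then discards \emph{every} clique of $\mathcal{K}_m$ containing an end-vertex (each such clique lies inside $N_G[u_1]\setminus\{u_3\}$, because $N_{G'}[u_1]$ is contained in that clique) and substitutes the end cliques. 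This swap is lossless and keeps every vertex outside the spines in at most two cliques, so the ``furthermore'' clause only needs to be verified at interior spine vertices. In your setup $H$ retains no trace of the spine, so nothing coordinates $\mathcal{K}_H$ with the end cliques: a vertex $v\in A_J\setminus\{u_1,u_2\}$ (or $v=u_0$) may lie in two cliques of $\mathcal{K}_H$ and additionally in $A_J$, i.e.\ in three cliques of your system. Then the furthermore clause fails: cliques of $\mathcal{K}_H$ are only cliques of $H$ and need not be maximal cliques of $G$ --- for instance, any clique of $\mathcal{K}_H$ containing $v$ and contained in $N_G(u_1)$ extends by $u_1$ --- so these three cliques cannot be ``the only maximal cliques of $G$'' containing $v$. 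Your first ``delicate point'' (an external vertex consumes at most one clique of $\mathcal{K}_H$) is therefore not a postponable technicality: it is simply false for an arbitrary cover supplied by Theorem~\ref{t-hypergraphs}, and the ``maximality of the multigraph cover'' you appeal to does not exist (the theorem does not produce maximal cliques, and enlarging its cliques to maximal ones can destroy the multiplicity bound).

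Two further points. First, if a vertex lies in end cliques of two distinct walks, your count can reach four, so even the bound of three fails; you would need to exclude this, but Lemma~\ref{l-disjoint} makes spines disjoint, not end cliques, and in the paper's construction such vertices are harmless precisely because both of their $\mathcal{K}_m$-cliques get discarded in the swap. Second, the interior case is not quite ``transparent'': for $u_2$, knowing $N_G[u_2]=A_J\cup\{u_3,u_4\}$ does not yet pin down its maximal cliques --- one must show that no vertex of $A_J\setminus\{u_1,u_2\}$ is adjacent to $u_3$ or $u_4$, which is exactly the step the paper carries out via Lemma~\ref{l-clique} (applied also to the reversed walk) and a contradiction with $2$-closedness through Lemma~\ref{l-non-adjacent}. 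The natural repair of your argument is to adopt the paper's construction: keep $u_1,u_k$ when forming the multigraph-coverable subgraph, and perform the remove-and-replace step on its clique cover.
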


\begin{proof}
We can assume that $G$ is connected
(otherwise, we apply the argument to each component of $G$).
In addition, we observe that if $G$ is the square of a cycle, then it has the desired covering;
and so we can assume that it is not.

For $i=1, 2, \dots, 7$, let $G_i$ be the graph in Figure~\ref{f-subgraphs},
and recall that $G$ is $\{G_1,G_3,G_5,G_6,G_7\}$-free by Lemma~\ref{l-forbidden}. 
By definition, every induced copy of $G_2$ or $G_4$ gives a good walk 
(see Figure~\ref{f-walk});
and thus for each such induced subgraph, all its vertices are contained in a 
maximal good walk.


%
%
%
\begin{figure}[ht]
$$\bp
\setcoordinatesystem units <0.8mm,1mm>
\setplotarea x from -30 to 30, y from -7 to 7
\put{
\bp
\setcoordinatesystem units <1.0mm,0.8mm>
\setplotarea x from -20 to 20, y from -10 to 10
\put{$\bullet$} at  -20  -8 
\put{$\bullet$} at  -12   8 
\put{$\bullet$} at   -4  -8 
\put{$\bullet$} at    4   8 
\put{$\bullet$} at   12  -8 
\put{$\bullet$} at   20   8 
\plot -20 -8  -12 8  -4 -8  4 8  12 -8  20 8 /
\plot -20 -8  12 -8 /
\plot -12  8  20  8 /
\put{$u_0$} at  -20  -11
\put{$u_1$} at  -12   11
\put{$u_2$} at   -4  -11
\put{$u_3$} at    4   11
\put{$u_4$} at   12  -11
\put{$u_5$} at   20   11
\put{$G_2$} at  -20   8
\ep} at -35 0
\put{
\bp
\setcoordinatesystem units <1.0mm,0.8mm>
\setplotarea x from -20 to 20, y from -10 to 10
\put{$\bullet$} at  -20  -8 
\put{$\bullet$} at  -12   8 
\put{$\bullet$} at   -4  -8 
\put{$\bullet$} at    4   8 
\put{$\bullet$} at   12  -8 
\put{$\bullet$} at   20   8 
\plot -20 -8  -12 8  -4 -8  4 8  12 -8  20 8 /
\plot -20 -8  12 -8 /
\plot -12  8  20  8 /
\put{$u_0$} at  -20  -11
\put{$u_1$} at  -12   11
\put{$u_2$} at   -4  -11
\put{$u_3$} at    4   11
\put{$u_4$} at   12  -11
\put{$u_5$} at   20   11
\setquadratic
\plot -20 -8  -12  13.5     20 8 /
\setlinear
\put{$G_4$} at  -22   8
\ep} at  35 0
\ep$$
\bsm
\caption{The graphs $G_2$ and $G_4$ viewed as good walks.}
\label{f-walk}
\end{figure}

By Lemma~\ref{l-path}, we have that
if $u_0 u_1 \dots u_{k+1}$ is a maximal good walk in $G$,
then $u_1 u_2 \dots u_k$ is a path;
and we consider a set $\mathcal{P}$ of all these paths
(meaning that the paths of $\mathcal{P}$ are pairwise
distinct; in particular, if $u_1 u_2 \dots u_k$ is in $\mathcal{P}$, then 
$u_k u_{k-1} \dots u_1$ is not).
By Lemma~\ref{l-disjoint}, the paths in $\mathcal{P}$ are pairwise vertex-disjoint.

For every path of $\mathcal{P}$, we remove from $G$ all its interior vertices 
(that is, vertices $u_2,\ldots,u_{k-1}$);
and we let $G'$ denote the resulting graph.
We note that $G'$ is $\{G_1, G_2, \dots, G_7\}$-free,
and thus it is a line graph of a multigraph by Theorem~\ref{t-multigraphs};
and we consider a system, say, $\mathcal{K}_{1}$, of cliques
given by Theorem~\ref{t-hypergraphs}
(covering $E(G')$ such that every vertex belongs to at most two cliques). 

In addition, we consider the system, say, $\mathcal{K}_{2}$,
of all distinct cliques in $G$ given by Lemma~\ref{l-clique}
(that is, $\mathcal{K}_{2}$ contains $\la N_G[u_1] \sm \{u_3\}\rag$
for every endvertex $u_1$ of a path in $\mathcal{P}$,
and $\la N_G[u_k] \sm \{u_{k-2}\}\rag$ for every $u_k$,
see Fig.~\ref{f-goodwalk}.)
For every path in $\mathcal{P}$,
we remove from $\mathcal{K}_{1}$ all cliques containing its endvertex
(that is, $u_1$ or $u_k$),
and then we add all cliques of $\mathcal{K}_{2}$;
and we let $\mathcal{K}_{0}$ denote the resulting system.

%
%
%
\begin{figure}[ht]
$$\bp
\setcoordinatesystem units <0.8mm,1mm>
\setplotarea x from -30 to 30, y from -7 to 7
\put{
\bp
\setcoordinatesystem units <1.0mm,0.8mm>
\setplotarea x from -20 to 20, y from -14 to 14
\put{$\bullet$} at  -32  -8 
\put{$\bullet$} at  -24   8 
\put{$\bullet$} at  -16  -8 
\put{$\bullet$} at   -8   8 
\put{$u_0$} at  -32  -11
\put{$u_1$} at  -24   11
\put{$u_2$} at  -16  -11
\put{$u_3$} at   -8   11
\plot -32 -8  -24 8  -16 -8  -8 8  -6 4 /
\plot -24  8   -4 8 /
\plot -32 -8  -10 -8 /
\circulararc  360  degrees from  -12  0 center at  -26 -2
\put{$K$} at  -32   2
\put{$\bullet$} at    8  -8 
\put{$\bullet$} at   16   8 
\put{$\bullet$} at   24  -8 
\put{$\bullet$} at   32   8 
\put{$u_{k-2}$} at    8  -11
\put{$u_{k-2}$} at   16   11
\put{$u_k$}     at   24  -11
\put{$u_{k+1}$} at   32   11
\plot  32  8   24 -8   16 8   8 -8   6 -4 /
\plot  24 -8    4 -8 /
\plot  32  8   10  8 /
\put{$\dots$} at  4  8 
\put{$\dots$} at -4 -8 
\circulararc  360  degrees from   12  0 center at  26 2
\put{$K'$} at   32  -2
\ep} at  0 0
\ep$$
\caption{A good walk $u_0u_1\ldots u_{k+1}$ and two cliques $K,K'\in\mathcal{K}_2$.}
\label{f-goodwalk}
\end{figure}

We call an edge \emph{red} if it is the first (last) edge of a path of 
$\mathcal{P}$.
For every vertex $v$ of $G$, the properties of good walks 
(Lemmas~\ref{l-disjoint},~\ref{l-clique},~\ref{l-degree}) and the definitions 
of the systems of cliques $\mathcal{K}_1$, $\mathcal{K}_2$ and $\mathcal{K}_0$
immediately imply the following:
\begin{itemize}
\item  if $v$ is in no path of $\mathcal{P}$, then it belongs to at most 
     two cliques of~$\mathcal{K}_{0}$,
\item if $v$ is incident with a red edge, then it belongs to precisely 
     one clique of~$\mathcal{K}_{0}$,
\item otherwise, it belongs to none.
\end{itemize}
Furthermore, Lemma~\ref{l-clique} implies that for any path $u_1\ldots u_k$ 
in $\mathcal{P}$, $N_G[u_1]\sm\{u_3\}=N_G[u_2]\sm\{u_3,u_4\}$, and
$N_G[u_k]\sm\{u_{k-2}\}=N_G[u_{k-1}]\sm\{u_{k-2},u_{k-3}\}$.
By Lemma~\ref{l-degree},
if $3 \leq i \leq k-2$, then $u_i$ has degree $4$ in $G$,
and thus $N_G(u_i) = \{ u_{i-2}, u_{i-1}, u_{i+1}, u_{i+2} \}$
by the definition of a good walk.
Consequently, the only edges of $G$ 
which are not covered by $\mathcal{K}_{0}$ are the non-red edges
whose both ends belong to a common path of $\mathcal{P}$
(namely, $u_i u_{i+1}$ for $i=2, \dots, k-2$ and
$u_i u_{i+2}$ for $i=1, 2, \dots, k-2$). 

By definition, every three consecutive vertices of a path of $\mathcal{P}$
induce a clique (namely, a triangle);
and we extend $\mathcal{K}_{0}$ by adding all these cliques,
and we let $\mathcal{K}$ denote the resulting system.
Clearly, every edge of $G$ is covered by a clique of~$\mathcal{K}$. 
We note that every vertex of $G$ belongs to at most three cliques of $\mathcal{K}$,
and furthermore a vertex belongs to three cliques
if and only if it is an interior vertex of a path of $\mathcal{P}$.
It remains to show that for each such vertex, these cliques are maximal and, in fact,
the only maximal cliques containing this vertex.

We consider an interior vertex $u_i$ of a path of $\mathcal{P}$,
and we discuss two cases.
In case $3 \leq i \leq k-2$, the considered cliques are
$\langle \{ u_{i-2}, u_{i-1}, u_{i} \} \rangle_{G}$,
$\langle \{ u_{i-1}, u_{i}, u_{i+1} \} \rangle_{G}$ and
$\langle \{ u_{i}, u_{i+1}, u_{i+2} \} \rangle_{G}$.
We recall that $N_G(u_i) = \{ u_{i-2}, u_{i-1}, u_{i+1}, u_{i+2} \}$
and $\langle N_{G}[u_i] \rangle_{G}$ is an induced subgraph of $G_2$ or $G_4$.
The desired claim follows.

Otherwise, we suppose that $i = 2$ (the argument for $i = k-1$ is similar).
We recall that the considered cliques are 
$\langle N_G[u_2] \setminus \{u_3,u_4\}\rangle_G$,
$\langle \{ u_{1}, u_{2}, u_{3} \} \rangle_{G}$ and
$\langle \{ u_{2}, u_{3}, u_{4} \} \rangle_{G}$.
Thus, we need to show that $u_,u_3$ and $u_4$ have no other common neighbors, 
i.e., that 
$N_G(u_2) \cap N_G(u_3) = \{u_1,u_4\}$ and
$N_G(u_2) \cap N_G(u_4) = \{u_3\}$
(recall that, by definition, $u_3$ is not adjacent to $u_0$,
and $u_4$ is adjacent to neither $u_0$ nor $u_1$).
For the sake of a contradiction, suppose that there is a vertex, say $v$, of
$N_G(u_2) \sm \{u_0,u_1,u_3,u_4\}$ which is adjacent to $u_3$ or $u_4$.
Lemma~\ref{l-clique} implies that $v$ is adjacent to $u_0$ and $u_1$,
and also to both $u_3$ and $u_4$ (by considering the reversed walk).
We note that 
$\langle \{ u_{0}, u_{1}, u_{3}, u_{4}, v \} \rangle_{G}$
is a $2$-connected subgraph of $\langle N_G(u_{2}) \rangle_{G}$
and $u_{0}, u_{3}$ and $u_{1}, u_{4}$ are two pairs of non-adjacent vertices.
By Lemma~\ref{l-non-adjacent}, $\langle N_{G}(x)\rangle_{G}$ is $2$-connected,
contradicting the assumption that $G$ is $2$-closed.
\end{proof}


\section{Local completions and maximal (a,b)-paths}
\label{s-paths}


With Lemma~\ref{l-cover} on hand, we can focus on vertices
whose neighborhood is non-complete and $2$-connected.
We recall results of~\cite{BollobasEtAl,RyjVraStability}
dealing with long paths in a local completion of a claw-free graph,
and we combine and adapt them for maximal $(a,b)$-paths.

%
%
\begin{propositionAcite}{\cite{BollobasEtAl}}
\label{p-long} 
Let $a, b$ and $x$ be vertices of a claw-free graph $G$ such that $N_G(x)$ 
induces a $3$-connected graph in $G$.
If $\Gstx$ has an $(a,b)$-path of length $\ell$, then $G$ has an $(a,b$)-path of 
length at least $\ell$. 
\end{propositionAcite}

Following~\cite{RyjVraStability}, for $i=0,1,2$ and a given vertex $x$ and a
path $P$ in a graph $G$, let $V^{x}_{i}(P)$ denote the set of all vertices
$y$ of $V(P) \cap N_G(x)$ such that $|N_{P}(y) \cap N_G[x]|=i$.
If $V^{x}_1(P)$ is non-empty, then $a^x_P$ $(b^x_P)$ denotes the first (last) 
vertex on $P$ belonging to $V^x_1(P)$, respectively.

We recall that a \emph{cut} in a connected graph is a set of vertices whose 
removal results in a disconnected graph.

%
%
\begin{propositionAcite}{\cite{RyjVraStability}}
\label{p-longest} 
Let $a, b$ and $x$ be vertices of a claw-free graph $G$ such that the connectivity
of $\langle N_G(x)\rangle_G$ is $2$, and let $R$ be a minimum cut in $\langle N_G(x)\rangle_G$.
Let $P^*$ be a longest $(a,b)$-path in $\Gstx$.
Then every longest $(a,b)$-path in $G$ is shorter than $P^*$ if and only if
$\{a,b\}$ is a cut in $\langle N_G(x) \rangle_G$
and every longest $(a,b)$-path $P$ in $\Gstx$ has the following properties:
\begin{enumerate}
\item[$(1)$] $x$ belongs to $V(P)$,
\item[$(2)$] $a,a^x_P,b,b^x_P$ are distinct vertices,
\item[$(3)$] $a^x_Pb^x_P$ is an edge of $G$,
\item[$(4)$] every component of ${\langle N_G(x) \rangle}_{G} -R$
     contains a vertex not belonging to $V^x_0(P)$,
\item[$(5)$] for every pair of vertices $u,v$ of $V^x_1(P)$ such that $u,v$ are in different components 
     of ${\langle N_G(x) \rangle}_{G} -R$, if the $(u,v)$-subpath of $P$ 
     contains an interior vertex, then it contains a vertex of $N_G[x] \setminus V^x_0(P)$
     as an interior vertex.
\end{enumerate}
Moreover, if a longest $(a,b)$-path in $G$ is shorter than $P^*$, then
the following are satisfied:
\begin{enumerate} \setcounter{enumi}{5}
\item[$(6)$] for every longest $(a,b)$-paths $P$ and $Q$ in $\Gstx$, we have
     $a^x_P=a^x_Q$ and $b^x_P=b^x_Q$,
\item[$(7)$] $a^x_Pa$ and $b^x_Pb$ are edges of $G$.
\end{enumerate}
\end{propositionAcite}

As a corollary of Propositions~\ref{p-long} and~\ref{p-longest},
we prove the following statement 
(which is formally stronger since longest paths are maximal).

%
%
\begin{corollary}
\label{c-maximal} 
Let $a, b$ and $x$ be vertices of a claw-free graph $G$ such that $N_G(x)$ induces a
$2$-connected graph in $G$, and let $R$ be a minimum cut in ${\langle N_G(x) \rangle}_{G}$.
Let $P^*$ be a maximal $(a,b)$-path in $\Gstx$.
Then there is no $(a,b)$-path $P'$ in $G$ such that $V(P^*)=V(P')$ if and only if
$\{a,b\}$ is a cut in $\langle N_G(x) \rangle_G$
and every $(a,b)$-path $P$ in $\Gstx$ such that $V(P^*)=V(P)$ has the following properties:
\begin{enumerate}
\item[$(1)$] $x$ belongs to $V(P)$,
\item[$(2)$] $a,a^x_P,b,b^x_P$ are distinct vertices,
\item[$(3)$] $a^x_Pb^x_P$ is an edge of $G$,
\item[$(4)$] every component of ${\langle N_G(x) \rangle}_{G} -R$
     contains a vertex not belonging to $V^x_0(P)$,
\item[$(5)$] for every pair of vertices $u,v$ of $V^x_1(P)$ such that $u,v$ are 
     in different components of $\la N_G(x) \rag -R$, if the $(u,v)$-subpath 
     of $P$ contains an interior vertex, then it contains a vertex of 
     $N_G[x] \sm V^x_0(P)$ as an interior vertex.
\end{enumerate}
Moreover, if there is no $(a,b)$-path $P'$ in $G$ such that $V(P^*)=V(P')$, then
the following are satisfied:
\begin{enumerate} \setcounter{enumi}{5}
\item[$(6)$] for every $(a,b)$-paths $P,Q$ in $\Gstx$ such that 
     $V(P^*)=V(P)=V(Q)$, we have
     $a^x_P=a^x_Q$ and $b^x_P=b^x_Q$,
\item[$(7)$] $a^x_Pa$ and $b^x_Pb$ are edges of $G$.
\end{enumerate}
\end{corollary}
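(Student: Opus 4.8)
The plan is to deduce Corollary~\ref{c-maximal} from Proposition~\ref{p-longest} by a reduction from ``maximal'' paths to ``longest'' paths, exploiting the fact that Proposition~\ref{p-longest} already supplies the exact characterization we want, but phrased for longest paths. The key observation is that every property listed in the corollary (and in the moreover-part) is a local, combinatorial condition on a single path $P$ with a prescribed vertex set; it never actually refers to the length of $P$ beyond the set $V(P)$. So the engine of the proof will be to run Proposition~\ref{p-longest} not on $G$ itself, but on the induced subgraph $H=\langle V(P^*)\rangle_G$ (equivalently, on $G$ restricted to the vertices of a fixed maximal $(a,b)$-path $P^*$ of $G^*_x$), where maximal $(a,b)$-paths become longest $(a,b)$-paths automatically.

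First I would set $H=\langle V(P^*)\rangle_{G^*_x}$, or more carefully the subgraph of $G$ on the vertex set $V(P^*)$ together with the induced edges, and check that $x\in V(P^*)$ in the relevant case so that $H^*_x=\langle V(P^*)\rangle_{G^*_x}$. The maximal $(a,b)$-path $P^*$ in $G^*_x$ uses exactly the vertices of $H^*_x$, hence it is a Hamilton $(a,b)$-path of $H^*_x$ and in particular a \emph{longest} $(a,b)$-path there. Likewise, an $(a,b)$-path $P$ in $G^*_x$ with $V(P^*)=V(P)$ is precisely a longest (indeed Hamilton) $(a,b)$-path of $H^*_x$, and an $(a,b)$-path $P'$ in $G$ with $V(P^*)=V(P')$ is exactly a longest $(a,b)$-path of $H$. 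The nonexistence of such $P'$ in $G$ then translates to: every longest $(a,b)$-path of $H$ is strictly shorter than $P^*$, which is the left-hand side of Proposition~\ref{p-longest} applied to $H$. The second half of the work is to verify that the structural quantities in the two statements agree on $H$: that $N_H(x)=N_G(x)$, that $\langle N_H(x)\rangle_H=\langle N_G(x)\rangle_G$ (so the minimum cut $R$, the components of $\langle N_G(x)\rangle_G-R$, and the sets $V^x_i(P)$, $a^x_P$, $b^x_P$ are the same whether computed in $H$ or in $G$), and that the edge conditions in items (3) and (7) hold in $H$ if and only if they hold in $G$.

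The main obstacle is exactly this compatibility check, and it is subtle because $N_H(x)$ need not equal $N_G(x)$: the subgraph $H$ is induced on $V(P^*)$, but a neighbour of $x$ in $G$ that happens not to lie on $P^*$ is simply absent from $H$. The repair is to enlarge $H$ to include all of $N_G[x]$, setting $H=\langle V(P^*)\cup N_G[x]\rangle_G$; this keeps $\langle N_G(x)\rangle_G$ intact inside $H$ so that $R$, the components, and the classifications $V^x_i(P)$ computed relative to $N_G[x]$ are preserved, while not changing which $(a,b)$-paths on the vertex set $V(P^*)$ exist, since any such path lives inside $V(P^*)$ and uses no vertex of $N_G[x]\setminus V(P^*)$ in its interior. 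I would need to confirm that $P^*$ remains a \emph{longest} $(a,b)$-path of $H^*_x$ after this enlargement, i.e. that adding isolated-from-the-path neighbours of $x$ does not create a longer $(a,b)$-path through them; this follows because any $(a,b)$-path using a vertex $w\in N_G[x]\setminus V(P^*)$ would, by claw-freeness at $x$ and the closure structure, be reroutable within $V(P^*)$, but this is precisely the kind of step where I expect to have to argue carefully rather than wave.

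Once the compatibility of $R$, the components of $\langle N_G(x)\rangle_G-R$, the sets $V^x_i$, and the distinguished vertices $a^x_P,b^x_P$ between $H$ and $G$ is established, the corollary follows by direct transport: the ``if and only if'' of Proposition~\ref{p-longest} applied to $H$ gives items (1)--(5), and its moreover-part gives items (6)--(7), with every occurrence of ``longest $(a,b)$-path in $G^*_x$ / in $G$'' replaced by the corresponding ``$(a,b)$-path with vertex set $V(P^*)$'' by the equivalence noted above. Proposition~\ref{p-long} is not strictly needed for the characterization itself, but I would invoke it (with the $3$-connected case, or via the $2$-connected analysis) only if a length-comparison argument is required to rule out spurious longer paths after the enlargement; in the clean formulation it serves merely as a sanity check that local completion cannot shorten the achievable path lengths. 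The conclusion is then immediate, and the statement is formally stronger than Proposition~\ref{p-longest} precisely because ``maximal'' is a weaker hypothesis than ``longest'' yet yields the same structural dichotomy.
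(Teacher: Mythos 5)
Your overall strategy --- passing to the induced subgraph on $V(P^*)$ so that ``maximal'' becomes ``longest'' (indeed Hamilton) and then transporting Proposition~\ref{p-longest} --- is exactly the paper's, and you correctly identify the one real obstacle: the neighbourhood of $x$ inside the restricted graph could a priori be smaller than $N_G(x)$. But your repair for this obstacle is where the proof breaks. Enlarging the host graph to $H=\langle V(P^*)\cup N_G[x]\rangle_G$ destroys the very equivalence the reduction rests on: in $H^*_x$ the path $P^*$ is no longer Hamilton, so it need not be a longest $(a,b)$-path there (a longer path could reroute through the new vertices while omitting some of $V(P^*)$, which maximality of $P^*$ in $G^*_x$ does not forbid, since maximality only excludes paths whose vertex set \emph{contains} $V(P^*)$); and, in the other direction, ``no $(a,b)$-path in $G$ with vertex set $V(P^*)$'' no longer translates to ``every longest $(a,b)$-path in $H$ is shorter than $P^*$'', because $H$ may contain long $(a,b)$-paths through $N_G[x]\setminus V(P^*)$ that have nothing to do with $V(P^*)$. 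You flag this step yourself as one where you would ``have to argue carefully rather than wave'', but no argument is supplied, and the claw-freeness/rerouting claim you sketch is not the fix.

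The missing idea is that no enlargement is needed at all: maximality of $P^*$ already forces $N_G[x]\subseteq V(P^*)$ in both directions of the equivalence. Indeed, in either direction $P^*$ contains an edge whose two ends lie in $N_G[x]$ --- in the backward direction because property (1), applied to $P=P^*$ itself, puts $x$ on $P^*$; in the forward direction because if no $(a,b)$-path of $G$ has vertex set $V(P^*)$, then $P^*$ must use an edge of $G^*_x$ absent from $G$, and every such edge joins two vertices of $N_G(x)$. Since $N_{G^*_x}[x]$ is a clique in $G^*_x$, any vertex of $N_G[x]$ missing from $P^*$ could be inserted into that edge, contradicting maximality. Hence $S=\langle V(P^*)\rangle_G$ already satisfies $\langle N_S(x)\rangle_S=\langle N_G(x)\rangle_G$, and the transport goes through on $S$ itself with no further adjustment. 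A further, smaller error: Proposition~\ref{p-long} is not the mere ``sanity check'' you suggest; it is needed in the forward direction to rule out that $\langle N_S(x)\rangle_S$ is $3$-connected (otherwise $P^*$ would yield an equally long $(a,b)$-path in $S$, i.e.\ a path in $G$ with vertex set $V(P^*)$, contradicting the hypothesis), and only then does the connectivity equal exactly $2$ as Proposition~\ref{p-longest} requires.
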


Fig.~\ref{f-vertices_on_path} shows a simple example of a possible position of 
the vertices $a^x_P$ and $b^x_P$ on $P$.

%
%
%
\begin{figure}[ht]
$$\bp
\setcoordinatesystem units <1mm,1mm>
\setplotarea x from -30 to 30, y from -7 to 7
\put{
\bp
\setcoordinatesystem units <1.0mm,1.0mm>
\setplotarea x from -20 to 20, y from -14 to 14
\put{$\bullet$} at    0    0 
\put{$\bullet$} at  -10   10 
\put{$\bullet$} at   10   10 
\put{$\bullet$} at   10  -10 
\put{$\bullet$} at  -10  -10 
\put{$\bullet$} at    0   18 
\put{$\bullet$} at    0  -18 
\put{$\bullet$} at  -17    5 
\put{$\bullet$} at  -17   -5 
\put{$\bullet$} at   17    5 
\put{$\bullet$} at   17   -5 
\setplotsymbol ({\large .})
\plot -10 10  0 18  10 10  17 5 /
\plot 17 -5  10 -10  0 -18  -10 -10  /
\plot -17 5  -17 -5  0 0   17 -5 /
\setplotsymbol ({\bf$\sim$})
\plotsymbolspacing=7.2pt
\plot -16.5 4.9  16.5 4.9 /
\plot -16.5 5.1  16.5 5.1 /
\plotsymbolspacing=0.4pt
\setplotsymbol ({\fiverm .})
\plot -17 5  -10 10 /
\plot -17 -5  -10 -10 /
\plot 17 5  17 -5 /
\plot 10 10  10 -10  -10 -10  -10 10  10 10 /
\plot 10 10  -10 -10 /
\plot 10 -10  -10 10 /
\plot -17 5  0 0  17 5 /
\plot -17 -5  0 0  17 -5 /
\plot 10 10  17 -5 /
\plot 10 -10  17 5 /
\plot -10 10  -17 -5 /
\plot -10 -10  -17 5 /
\ep} at  0 0
\put{$a$}     at   -12   12
\put{$a^x_P$} at    13   12
\put{$b^x_P$} at    13  -12
\put{$b$}     at   -12  -12
\put{$a^+$}   at     5   19
\put{$b^-$}   at     4  -19
\put{$x$}     at     1    3
\ep$$
\caption{An example of a possible position of the vertices $a^x_P$ and $b^x_P$ 
on $P$ (the path $P^*$ is in bold, the wavy line indicates a new edge).}
\label{f-vertices_on_path}
\end{figure}

\begin{proofbt} {\bf of Corollary~\ref{c-maximal}.}\quad
For simplicity, let $S$ denote the graph $\langle V(P^*)\rangle_{G}$.
Since clearly $x\in V(S)$ and $P^*$ is maximal, necessarily 
$N_G[x]\subset V(P^*)$, implying that $\la V(P^*)\ra_{\Gstx}=S^*_x$,
and $P^*$ is a longest path in~$S^*_x$.
%
%
%

Now, we prove the statement.
First, suppose that there is no $(a,b)$-path $P'$ in $G$ such that $V(P^*)=V(P')$.
Clearly, there is no such path $P'$ in $S$.
In~other words, every longest $(a,b)$-path in $S$ is shorter than $P^*$.
By Proposition~\ref{p-long}, $\langle N_S(x) \rangle_S$ is not $3$-connected,
which implies that its connectivity is $2$.
By Proposition~\ref{p-longest}, $\{a,b\}$ is a cut in $\langle N_S(x) \rangle_S$
(and in $\langle N_G(x) \rangle_G$).
Furthermore, properties $(1)$ -- $(7)$ are satisfied for 
every longest $(a,b)$-path in $S^*_x$,
that is, for every $(a,b)$-path $P$ such that $V(P^*)=V(P)$.
We observe that the properties are also satisfied in $\Gstx$.

Next, we suppose that $\langle N_G(x)\rangle_G$ is $2$-connected and
$\{a,b\}$ is a cut in $\langle N_G(x) \rangle_G$ (and in $\la N_S(x) \ra_S$),
and properties $(1)$ -- $(5)$ are satisfied
for every $(a,b)$-path $P$ in $\Gstx$ such that $V(P^*)=V(P)$.
We note that the properties are satisfied for every longest $(a,b)$-path in $S^*_x$.
By Proposition~\ref{p-longest},
every longest $(a,b)$-path in $S$ is shorter than $P^*$,
that is, there is no $(a,b)$-path in $S$ whose vertex set is $V(P^*)$,
and thus there is none in~$G$.
\end{proofbt}


\section{Proof of Theorem~\ref{t-closure}}
\label{s-proof}


In this section, we prove Theorem~\ref{t-closure}, that is,
a Tutte-closure of a claw-free graph
is a line graph of a hypergraph of rank at most $3$.

Let us give a brief outline of the proof.
Arguing with Corollary~\ref{c-maximal} and Lemma~\ref{l-non-adjacent},
we show that a Tutte-closure of a claw-free graph contains
at most one vertex whose neighborhood is non-complete and $2$-connected.
We remove this vertex, and we consider the resulting $2$-closed graph
and its clique covering given by Lemma~\ref{l-cover}.
We extend this covering to the whole graph, and we use
the case $r = 3$ of Theorem~\ref{t-hypergraphs}.

\bs

\begin{proofbt} {\bf of Theorem~\ref{t-closure}}. \quad
Let $G$ be a claw-free graph and let $G^T$ be its  Tutte-closure.
We note that $G^T$ is claw-free (by Lemma~\ref{l-free}).

We show that if $G^T$ is not connected or if it is $2$-closed,
then the statement is satisfied.
In the former case,
the components of $G^T$ are complete graphs (by definition).
In the latter case,
there is a system of cliques covering $E(G^T)$
such that every vertex of $G^T$ belongs to at most three cliques (by Lemma~\ref{l-cover}).
In both cases,
$G^T$ is a line graph of a hypergraph of rank at most $3$
(by Theorem~\ref{t-hypergraphs}).

We can assume that $G^T$ is connected and it is not $2$-closed.
By the definition of Tutte-closure,
there is a pair of vertices $a, b$ such that
no maximal $(a,b)$-path in $G^T$ is a Tutte path;
and we fix one such pair $a, b$.

We show the following.

%
%
\begin{claim}
\label{c-1}
Let $x$ be a vertex whose neighborhood in $G^T$ is non-complete and $2$-connected.
Then $\{a,b\}$ is a cut in $\langle N_{G^T}(x)\rangle_{G^T}$
(in particular, $x$ is adjacent to $a$ and $b$). 
\end{claim}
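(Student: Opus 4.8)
The plan is to apply Corollary~\ref{c-maximal} to the triple $a, b, x$ in $G^T$, which is legitimate since $G^T$ is claw-free and, by hypothesis, $\langle N_{G^T}(x)\rangle_{G^T}$ is $2$-connected; I would fix a minimum cut $R$ of this subgraph. The key input is a well-chosen maximal $(a,b)$-path in $(G^T)^*_x$. Since $N_{G^T}(x)$ is non-complete, the Tutte-closure property recorded after the definition of $G^T$ guarantees that $(G^T)^*_x$ is Tutte-connected, so there is a maximal $(a,b)$-path $P^*$ in $(G^T)^*_x$ which is a Tutte path. I would take this $P^*$ as the path in Corollary~\ref{c-maximal} and aim to verify its standing hypothesis that there is no $(a,b)$-path $P'$ in $G^T$ with $V(P^*)=V(P')$. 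The conclusion that $\{a,b\}$ is a cut in $\langle N_{G^T}(x)\rangle_{G^T}$ is then immediate, and since a cut consists of vertices of the graph whose vertex set is $N_{G^T}(x)$, both $a$ and $b$ lie in $N_{G^T}(x)$, which gives the parenthetical assertion that $x$ is adjacent to $a$ and~$b$.

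To verify the hypothesis, suppose for contradiction that some $(a,b)$-path $P'$ in $G^T$ satisfies $V(P')=V(P^*)$. First I would argue that $P'$ is \emph{maximal} in $G^T$: any $(a,b)$-path in $G^T$ is also an $(a,b)$-path in $(G^T)^*_x$, so a proper extension of $P'$ in $G^T$ would contradict the maximality of $P^*$ in $(G^T)^*_x$. Next I would show that $P'$ is a Tutte path in $G^T$. Here the essential observation is that $(G^T)^*_x$ is obtained from $G^T$ by adding edges only within $N_{G^T}(x)$; hence, after deleting the common vertex set $V(P^*)=V(P')$, each component of $G^T - V(P')$ is contained in a unique component of $(G^T)^*_x - V(P^*)$, and every vertex of $P'$ that is adjacent in $G^T$ to a component $C'$ of $G^T - V(P')$ is also adjacent in $(G^T)^*_x$ to the component $C$ of $(G^T)^*_x - V(P^*)$ containing $C'$. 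Consequently the number of neighbours of $C'$ on $P'$ is at most the number of neighbours of $C$ on $P^*$, which is at most $\min\{3,|V(P^*)|-1\}$ because $P^*$ is a Tutte path in $(G^T)^*_x$; since $|V(P')|=|V(P^*)|$, this shows $P'$ is a Tutte path in $G^T$. But then $P'$ is a maximal $(a,b)$-path in $G^T$ that is a Tutte path, contradicting the choice of the pair $a, b$. This establishes the hypothesis of Corollary~\ref{c-maximal} and hence the claim.

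I expect the step verifying that $P'$ is a Tutte path in $G^T$ to be the one requiring genuine care: it rests on the monotonicity, under deleting the edges of $(G^T)^*_x$ that lie inside $N_{G^T}(x)$, of both the component structure (components can only split, never merge) and the sets of neighbours on the path (which can only shrink), together with keeping the bound $\min\{3,|V(P)|-1\}$ intact across the passage from $(G^T)^*_x$ to $G^T$. Once this monotonicity is in place, the remaining bookkeeping---the maximality of $P'$ and the translation of ``$\{a,b\}$ is a cut'' into $a,b \in N_{G^T}(x)$---is routine.
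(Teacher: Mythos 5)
Your proposal is correct and takes essentially the same route as the paper: the paper's proof of Claim~\ref{c-1} likewise takes a maximal Tutte $(a,b)$-path $P$ in $(G^T)^*_x$ supplied by the definition of the Tutte-closure, asserts that no $(a,b)$-path in $G^T$ shares its vertex set (which the paper compresses into ``by definition'' and you justify explicitly via the contradiction with the choice of the pair $a,b$), and then applies Corollary~\ref{c-maximal}. Your detailed verification that such a path $P'$ would be maximal in $G^T$ and a Tutte path in $G^T$ (components only split and neighbour sets only shrink when the added edges inside $N_{G^T}(x)$ are removed) is a sound expansion of the step the paper leaves implicit.
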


\begin{proofcl}
We consider such a vertex $x$. 
By definition, the graph $(G^T)^*_x$ has a maximal $(a,b)$-path, say $P$, 
which is a Tutte path,
and there is no $(a,b)$-path $P'$ in $G^T$ such that $V(P) = V(P')$.  
By Corollary~\ref{c-maximal},
$\{a,b\}$ is a cut in $\langle N_{G^T}(x)\rangle_{G^T}$.
\end{proofcl}

We consider a vertex $x$ whose neighborhood in $G^T$ is non-complete and 
$2$-connected, and a maximal $(a,b)$-path $P$ in $(G^T)^*_x$ which is a 
Tutte path, and we make a special choice of $P$ in $(G^T)^*_x$ as follows.
We consider the vertices $a^x_P$ and $b^x_P$ given by Corollary~\ref{c-maximal}
(applied to $G^T$).
In particular, the choice implies that set $V^x_1(P)$ is non-empty;
and so there is an edge, say $e$, of $P$
whose both ends belong to $N_{G^T}[x]$,
and we use this fact and show that we can choose $P$ such that no
interior vertex of $P$ belongs to $V^x_0(P)$.
If there is such a vertex, say $i$,
then we consider the graph $\langle N_{P}(i) \cup \{ i,x\} \rangle_{G^T}$,
and we note that the vertices of $N_{P}(i)$ are joined by an edge (since $G^T$ is 
claw-free).
We modify $P$ by adding this edge and removing edges incident with $i$,
and by adding the edges joining $i$ to the vertices incident with $e$ and 
removing $e$. The choice of $P$ follows.
(Note that, by the definition of the sets $V_i^x(P)$, vertices in $V_0(P)$ 
cannot be consecutive on $P$, and hence the elimination of such a vertex 
does not affect a possibility to eliminate others).

We fix this choice of $x$ and $P$, and we simplify the notation by letting
$a'$ denote $a^x_P$, and $b'$ denote $b^x_P$
(we recall that these are the first and the last vertex on $P$ belonging to $V^x_1(P)$).
We let $a^+$ denote the neighbor of $a$ on $P$.

We observe that the graph $\langle N_{G^T}(x)\rangle_{G^T} - \{a,b\}$
has precisely two components
(since $G^T$ is claw-free, $\langle N_{G^T}(x)\rangle_{G^T}$ is $2$-connected,
and $\{a,b\}$ is a cut in $\langle N_{G^T}(x)\rangle_{G^T}$ by Claim~\ref{c-1}).
Properties (2) and (3) of Corollary~\ref{c-maximal} imply that
$a'$ and $b'$ belong to the same component of $\langle N_{G^T}(x)\rangle_{G^T} - \{a,b\}$;
and we let $C'$ denote the set of all vertices of this component,
and $C$ denote the set of all vertices of the other component.

We show the following.

%
%
\begin{claim}
\label{c-2}
No vertex of $N_{G^T}(x)\setminus \{a,b,a'\}$ is adjacent to $a^+$.
Furthermore, $N_{G^T}(a)\cap C'=\{a'\}$ and $N_{G^T}(b)\cap C'=\{b'\}$
(in particular, $ab'$ and $a'b$ are non-edges in~$G^T$).
\end{claim}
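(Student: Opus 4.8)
The plan is to derive both statements by contradiction, in each case reorganising the path $P$ inside the clique $N_{G^T}[x]$ of $(G^T)^*_x$ and invoking the rigidity granted by Corollary~\ref{c-maximal}. The facts I would rely on throughout are the following. By Claim~\ref{c-1} we have $a\in N_{G^T}(x)$; since $a,a^x_P,b,b^x_P$ are distinct (property $(2)$) and $a$ is an end-vertex of $P$, the vertex $a$ lies in $V^x_0(P)$, so its unique neighbour $a^+$ on $P$ satisfies $a^+\notin N_{G^T}[x]$. By property $(7)$, $a'=a^x_P$ is adjacent to $a$ in $G^T$, and, as observed before the claim, $P$ contains every vertex of $N_{G^T}[x]$, which forms a clique in $(G^T)^*_x$. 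The decisive tool is property $(6)$: because there is no $(a,b)$-path in $G^T$ on the vertex set $V(P)$, every $(a,b)$-path $\tilde P$ in $(G^T)^*_x$ with $V(\tilde P)=V(P)$ has $a^x_{\tilde P}=a'$. Hence, if I can produce such a $\tilde P$ in which the successor of $a$ lies in $N_{G^T}(x)$, then $a\in V^x_1(\tilde P)$ and $a^x_{\tilde P}=a\neq a'$, a contradiction. Every step below is aimed at building a rerouting that places a neighbour of $a$ from $N_{G^T}(x)$ immediately after $a$.

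For the first statement I would assume that some $w\in N_{G^T}[x]\sm\{a,b,a'\}$ is adjacent to $a^+$; note $w\neq x$ since $a^+\notin N_{G^T}[x]$, so $w\in N_{G^T}(x)$ and (as no vertex of $N_{G^T}(x)$ lies strictly between $a$ and $a'$) $w$ occurs on $P$ after $a'$. Writing the initial part of $P$ as $a,\text{ext}_1,a'$, where $\text{ext}_1=a^+,\dots,c_j$ is the external segment joining $a$ to $a'$ (all its vertices outside $N_{G^T}[x]$, by the no-interior-$V^x_0$ choice of $P$), I would start the new path with the $G^T$-edge $aa'$, traverse $\text{ext}_1$ reversed to reach $a^+$, and then pass to $w$ along the edge $a^+w$; it remains to cover the rest of $V(P)$ by a Hamilton $(w,b)$-path on the tail of $P$. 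This relocation of the start from $a'$ to $w$ is carried out using clique edges inside $N_{G^T}[x]$ together with the fact that $w\notin V^x_0(P)$, so that $w$ has a $P$-neighbour in $N_{G^T}[x]$ to reattach to. The resulting $\tilde P$ has $a$ followed by $a'\in N_{G^T}(x)$, which yields the contradiction.

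For the second statement I would assume $v\in C'\sm\{a'\}$ is adjacent to $a$, and reroute so that $a$ is followed by $v$ via the $G^T$-edge $av$. The obstruction is that $a$ must still be covered together with $\text{ext}_1$, whose end $a^+$ has to be reattached to $N_{G^T}[x]$ at a vertex other than $a$; here the first statement is exactly what is needed, since it forces $N_{G^T}(a^+)\cap N_{G^T}[x]\subseteq\{a,a',b\}$ and thus pins down the only admissible reattachments $a'$ and $b$. Rerouting accordingly produces an $(a,b)$-path $\tilde P$ on $V(P)$ with successor $v\in N_{G^T}(x)$ of $a$, again contradicting property $(6)$. The statements for $b$ and $b'$ follow by the symmetry reversing $P$ and exchanging $a\leftrightarrow b$ and $a'\leftrightarrow b'$ (the set $C'$, containing both $a'$ and $b'$, being preserved). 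Finally, since $a'\neq b'$ by property $(2)$ and both lie in $C'$, the equalities $N_{G^T}(a)\cap C'=\{a'\}$ and $N_{G^T}(b)\cap C'=\{b'\}$ immediately give that $ab'$ and $a'b$ are non-edges.

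I expect the main obstacle to be the rerouting itself: verifying that, after committing $a$ to a neighbour in $N_{G^T}(x)$ and re-attaching $\text{ext}_1$, the remaining vertices can be threaded into a single $(a,b)$-path on the same vertex set. The delicate cases are when an external segment becomes pendant at an already-used vertex and when the relocated start $w$ is a run-end, so that only one of its $P$-neighbours lies in $N_{G^T}[x]$. These I would resolve using the clique structure of $N_{G^T}[x]$ in $(G^T)^*_x$, the absence of interior $V^x_0$-vertices in $P$, and, where a second independent pair of non-adjacent vertices is produced, Lemma~\ref{l-non-adjacent} together with property $(5)$ of Corollary~\ref{c-maximal}.
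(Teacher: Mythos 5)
Your overall strategy for the first statement (reroute $P$ inside the clique $N_{G^T}[x]$ of $(G^T)^*_x$ and contradict the rigidity of $a^x_P$ via Corollary~\ref{c-maximal}) is the same as the paper's, and your preliminary observations are correct ($a\in V^x_0(P)$, hence $a^+\notin N_{G^T}[x]$; $w$ lies after $a'$; property (7) gives the edge $aa'$). The gap is in the rerouting itself, precisely in the ``run-end'' case you flag but do not resolve --- and with your prescribed prefix it cannot be resolved. Suppose the unique $P$-neighbour of $w$ in $N_{G^T}[x]$ is its predecessor $w'$, so that the successor $w''$ of $w$ lies outside $N_{G^T}[x]$, and suppose moreover that all vertices after $w$ on $P$ lie outside $N_{G^T}[x]$ until $b$ (this configuration cannot be excluded: $b\in V^x_0(P)$ by property (2), so the block of $P$ ending at $b$ is external). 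After your prefix $a\cdot a'\cdot(\mathrm{ext}_1\ \text{reversed})\cdot a^+\cdot w$, the remaining vertices are the detached segment $[a''\cdots w']$ and the external tail $[w''\cdots b^+]\cup\{b\}$. Every vertex of $[w''\cdots b^+]$ has only its two $P$-edges available (no clique edges reach it), so $w''$ can be entered only from $w$; hence any completion is forced to continue $w\cdot w''\cdots b^+\cdot b$ and terminate at $b$, leaving $[a''\cdots w']$ uncovered. No Hamilton $(w,b)$-path on the tail exists in this case. The paper splices differently: it removes the three edges $aa^+$, $a'a''$, $ww'$, adds $a^+w$, and rejoins the resulting segments by clique edges so that $[a''\cdots w']$ is inserted \emph{between $a$ and $a'$}, giving $a\cdot a''\cdots w'\cdot a'\cdots a^+\cdot w\cdots b$; the contradiction then comes from $a$'s successor $a''\in N_{G^T}[x]$, and this construction works uniformly in all cases.

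For the second statement your approach breaks down more fundamentally. Rerouting so that $a$ is followed by $v\in C'\setminus\{a'\}$ forces the external segment $\mathrm{ext}_1$ to be re-entered at $a^+$ from some vertex other than $a$, and you propose $a'$ or $b$ as the ``admissible reattachments''. But the first statement only says that no vertex of $N_{G^T}[x]\setminus\{a,b,a'\}$ is adjacent to $a^+$; it does \emph{not} assert that $a^+a'$ or $a^+b$ is an edge of $G^T$, and nothing in the setup guarantees either edge. Without one of them your path $\tilde P$ cannot be closed up, so the contradiction never materializes. The paper proves this part with no rerouting at all: assuming $u\in C'\setminus\{a'\}$ is adjacent to $a$, pick $v\in N_{G^T}(a)\cap C$ (nonempty because $\langle N_{G^T}(x)\rangle_{G^T}$ is $2$-connected and $\{a,b\}$ is a cut, so $a$ has neighbours in both components); then $a^+u$ and $a^+v$ are non-edges by the first statement, and $uv$ is a non-edge since $u,v$ lie in different components of $\langle N_{G^T}(x)\rangle_{G^T}-\{a,b\}$, so $\{a,a^+,u,v\}$ induces a claw centred at $a$, contradicting claw-freeness of $G^T$. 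Your closing deduction of the non-edges $ab'$ and $a'b$ from the two equalities is fine, but the equalities themselves need this claw argument (or a repaired rerouting) to be established.
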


\begin{proofcl} 
Suppose, to the contrary, that there is such a vertex, say, $w$.
We recall that $P$ contains all vertices of $N_{G^T}[x]$,
and no interior vertex of $P$ belongs to $V^x_0(P)$
(by the choice of $P$).
In~particular, we can choose an edge of $P$ 
incident with two vertices of $N_{G^T}[x]$ one of which is $a'$,
and analogously choose an edge of $P$ incident with 
 two vertices of $N_{G^T}[x]$ one of which is $w$
(it might be the same edge).  
We consider subpaths of $P$ obtained by removing these edges (this edge)
and the edge $a a^+$,
and we connect two of these subpaths using the edge $a^+w$.
We observe that each of the obtained subpaths has both ends in $N_{G^T}[x]$, 
and so we can join these subpaths into an $(a,b)$-path 
(since $N_{(G^T)^*_x}[x]$ induces a clique).
Let $P'$ denote the resulting path.
Clearly, $P'$ is a maximal $(a,b)$-path (since $V(P) = V(P')$). We note 
that $a = a^x_{P'}$, contradicting property (2) of Corollary~\ref{c-maximal}.

We show that $N_{G^T}(a)\cap C'=\{a'\}$
(the argument for showing $N_{G^T}(b)\cap C'=\{b'\}$ is similar).
By item (7) of Corollary~\ref{c-maximal}, $a$ is adjacent to $a'$,
thus $a'$ belongs to $N_{G^T}(a)\cap C'$.
For every vertex $u$ of $C' \setminus \{a'\}$,
we show that it is not adjacent to $a$.
We consider a vertex, say $v$, of $N_{G^T}(a)\cap C$
and the graph $\langle \{a,a^+,u,v\} \rangle_{G^T}$.  
First, we observe that vertices $a,a^+,u,v$ are distinct
(property (2) of Corollary~\ref{c-maximal} implies that $a$ belongs to $V^x_0(P)$,
that is, $a^+$ is not adjacent to $x$, and thus it is distinct from $u$ and $v$).
Next, we discuss the edges.
By definition, $a$ is adjacent to $a^+$ and $v$, and $u$ is not adjacent to $v$
(since they are in different components of
$\langle N_{G^T}(x)\rangle_{G^T} - \{a,b\}$).
By the first part of Claim~\ref{c-2}, $a^+$ is adjacent to neither $u$ nor $v$. 
We conclude that $u$ is not adjacent to $a$ (since $G^T$ is claw-free).
\end{proofcl}

We let $A$ denote the set $N_{G^T}(a) \setminus N_{G^T}[x]$,
and $B$ denote $N_{G^T}(b) \setminus N_{G^T}[x]$.
We study sets $A$, $B$, $C$ and $C'$ (see Figure~\ref{f-abc})
in the following claims.

%
%
%
\begin{figure}[ht]
$$\bp
\setcoordinatesystem units <1mm,1mm>
\setplotarea x from -30 to 30, y from -25 to 25
\put{
\bp
\setcoordinatesystem units <0.8mm,0.8mm>
\setplotarea x from -20 to 20, y from -14 to 14
\put{$\bullet$} at    0    0 
\put{$x$} at    5   2 
\put{$\bullet$} at  -20    0 
\put{$a$} at   -22   2.5 
\put{$\bullet$} at  -38   -7 
\put{$a^+$} at   -38   -3 
\put{$\bullet$} at   20    0 
\put{$b$} at    22   2.5 
\put{$\bullet$} at  -20  -20 
\put{$a'$} at   -22  -22.5 
\put{$\bullet$} at   20  -20 
\put{$b'$} at    22  -22.5 
\circulararc  360  degrees from  -25 -10   center at -35  -10
\put{$A$} at  -35   -10 
\circulararc  360  degrees from   25 -10   center at  35  -10
\put{$B$} at  35   -10 
\ellipticalarc axes ratio 2:1 360 degrees from -20 -20 center at 0 -20
\put{$C'$} at  0  -20 
\ellipticalarc axes ratio 2:1 360 degrees from -20  20 center at 0  20
\put{$C$} at  0   20 
\plot -35 0  35 0 /
\plot -20 -20  -20 20 /
\plot  20 -20   20 20 /
\plot 0  0   -17  14.7 /
\plot 0  0    17  14.7 /
\plot 0  0    10  15   /
\plot 0  0   -10  15   /
\plot 0  0     0  16   /
\plot 0  0   -17 -14.7 /
\plot 0  0    17 -14.7 /
\plot 0  0    10 -15   /
\plot 0  0   -10 -15   /
\plot 0  0     0 -16   /
\plot -20 0  11 11.7 /
\plot -20 0  -16 18  /
\plot -20 0   -7 16  /
\plot -20 0    3 14  /
\plot  20 0  -11 11.7 /
\plot  20 0   16 18  /
\plot  20 0    7 16  /
\plot  20 0   -3 14  /
\plot -20 0   -25.8 -14  /
\plot -20 0   -29   -9   /
\plot -20 0   -32   -5   /
\plot  20 0    25.8 -14  /
\plot  20 0    29   -9   /
\plot  20 0    32   -5   /
\plot -20 -20   -25.8  -6  /
\plot -20 -20   -29  -11   /
\plot -20 -20   -32   -15  /
\plot -20 -20   -35   -20  /
\plot  20 -20    25.8  -6  /
\plot  20 -20    29  -11   /
\plot  20 -20    32   -15  /
\plot  20 -20    35   -20  /
\ep} at  0 0
\ep$$
\bsm
\caption{The sets $A$, $B$, $C$, $C'$ are pairwise disjoint and
    each of the sets $A \cup \{a, a'\}$, $B \cup \{b, b'\}$,
    $C \cup \{a, x\}$, $C \cup \{b, x\}$, $C' \cup \{x\}$
    induces a clique.}
\label{f-abc}
\end{figure}

%
%
\begin{claim}
\label{c-3}
Each of the sets $A$, $B$, $C \cup \{a\}$, $C \cup \{b\}$ and $C'$ induces a clique in~$G^T$.
\end{claim}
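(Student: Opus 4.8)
The plan is to derive each clique assertion directly from claw-freeness of $G^T$, using that $C$ and $C'$ are the vertex sets of the two components of $\langle N_{G^T}(x)\rangle_{G^T} - \{a,b\}$ (so there is no edge between $C$ and $C'$), together with Claim~\ref{c-2} and Claim~\ref{c-1}. Throughout, the template is the same: to show that two vertices are adjacent, I assume they are not and exhibit an induced copy of $K_{1,3}$, contradicting the hypothesis that $G^T$ is claw-free. Note that both $C$ and $C'$ are non-empty (they are components), and $C'$ contains the distinct vertices $a'$ and $b'$.

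First I would dispatch $A$ and $B$, which are immediate. Every vertex of $A$ is a neighbour of $a$ lying outside $N_{G^T}[x]$, hence is not adjacent to $x$, while $a$ is adjacent to $x$ by Claim~\ref{c-1}. So if $u,v \in A$ were non-adjacent, then $a$ with leaves $u$, $v$, $x$ would induce a claw. Thus $A$ is a clique, and $B$ is handled symmetrically. Next I would treat $C'$ and $C$: since every vertex of $C'$ is adjacent to $x$, a non-adjacent pair $u,v \in C'$ together with any $w \in C$ and centre $x$ would induce a claw, because $w$ lies in the other component and so is adjacent to neither $u$ nor $v$. Hence $C'$ is a clique, and exchanging the roles of $C$ and $C'$ shows $C$ is a clique as well.

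Finally, the main obstacle is to upgrade ``$C$ is a clique'' to ``$C \cup \{a\}$ is a clique'' (and symmetrically $C \cup \{b\}$), that is, to show $a$ is adjacent to every vertex of $C$. Here I would invoke Claim~\ref{c-2}: from $N_{G^T}(a) \cap C' = \{a'\}$ and the fact that $a' \neq b'$ (property (2) of Corollary~\ref{c-maximal}), the vertex $b'$ lies in $C'$ and is not adjacent to $a$. If some $v \in C$ were non-adjacent to $a$, then $x$ with leaves $a$, $v$, $b'$ would induce a claw, since $a$ is non-adjacent to $b'$ by the above, $v$ is non-adjacent to $b'$ as they lie in different components, and $a$ is non-adjacent to $v$ by assumption. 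This contradiction yields that $C \cup \{a\}$ is a clique; using the leaf $a'$ and the vertex $b$ in place of $b'$ and $a$ gives $C \cup \{b\}$. The only genuinely non-mechanical point is selecting the correct third leaf ($b'$ when arguing for $a$, and $a'$ when arguing for $b$) and justifying its non-adjacency to $a$ (respectively $b$) via Claim~\ref{c-2}; everything else is a routine claw.
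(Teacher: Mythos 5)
Your proof is correct and follows essentially the same route as the paper: every adjacency is forced by exhibiting an induced claw centred at $a$, $b$, or $x$, with Claim~\ref{c-2} supplying the non-edges $ab'$ and $a'b$ and the component structure of $\langle N_{G^T}(x)\rangle_{G^T} - \{a,b\}$ supplying the non-edges between $C$ and $C'$. The paper merely compresses this by assigning to each set a fixed pair of vertices (e.g.\ $\{x,b'\}$ to $C \cup \{a\}$) that completes any non-adjacent pair to a claw, whereas you split $C \cup \{a\}$ into two steps; the underlying claws are the same.
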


\begin{proofcl} 
Suppose, to the contrary, that some of these sets contains a pair $u,v$ of 
non-adjacent vertices.
If $u,v\in A$, then $\lab a,u,v,x\rab_{G^T}$ is a claw, a contradiction. 
Similarly, if $u,v\in B$, then $\lab b,u,v,x\rab_{G^T}$ is a claw,
if $u,v\in C\cup\{a\}$, then $\lab x,u,v,b'\rab_{G^T}$ is a claw,
if $u,v\in C\cup\{b\}$, then $\lab x,u,v,a'\rab_{G^T}$ is a claw, and 
if $u,v\in C'$, then $\lab x,u,v,w\rab_{G^T}$ for $w\in C$ is a claw.
\end{proofcl}

%
%
\begin{claim}
\label{c-4}
No vertex of $A \cup B$ is adjacent to a vertex of $C$.
Furthermore, both $A \cup \{a, a'\}$ and $B \cup \{b, b'\}$ induce a clique 
in $G^T$.
\end{claim}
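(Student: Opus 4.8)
The plan is to establish the first assertion (that there is no edge between $A \cup B$ and $C$) first, and then to deduce the clique structure of $A \cup \{a,a'\}$ and $B \cup \{b,b'\}$ as a short consequence. Throughout I work on the $A$-side; the $B$-side is symmetric under interchanging the roles of $a, a', a^+$ with those of $b, b'$ and the neighbour of $b$ on $P$ (using the reversed walk, exactly as in the proof of Claim~\ref{c-2}). I first record that $C$ is non-empty: since $\{a,b\}$ is a cut of the $2$-connected graph $\langle N_{G^T}(x)\rangle_{G^T}$ whose removal leaves exactly the two components $C$ and $C'$, neither component is empty, so I may fix a vertex $v \in C$. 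I also note that $A$ is disjoint from $C$ and $C'$ (since $A$ avoids $N_{G^T}[x]$), so the vertices below are automatically distinct.

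Granting the first assertion, the second follows by claw-freeness centred at $a$. Every vertex of $A$ is adjacent to $a$ by definition, and $a'$ is adjacent to $a$ by property~$(7)$ of Corollary~\ref{c-maximal}. Suppose two vertices $p, q \in A \cup \{a'\}$ were non-adjacent. By the first assertion every vertex of $A$ is non-adjacent to $v$, while $a'$ is non-adjacent to $v$ because $a' \in C'$ and $v \in C$ lie in different components of $\langle N_{G^T}(x)\rangle_{G^T} - \{a,b\}$. Hence $\{p,q,v\}$ is an independent set contained in $N_{G^T}(a)$, so $\langle \{a,p,q,v\} \rangle_{G^T}$ is an induced claw, a contradiction. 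Thus $A \cup \{a,a'\}$ induces a clique, and $B \cup \{b,b'\}$ follows symmetrically.

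It remains to prove the first assertion, which is the main obstacle. Suppose for contradiction that some $w \in A$ is adjacent to some $c \in C$. If $w = a^+$, this already contradicts the first part of Claim~\ref{c-2}, since $c \in C \subseteq N_{G^T}[x] \setminus \{a,b,a'\}$; so assume $w \neq a^+$. The plan is to use the edge $wc$ to reroute the chosen Tutte path $P$ into an $(a,b)$-path $P'$ lying entirely in $G^T$ (using no edge added by the local completion at $x$) with $V(P') = V(P)$, which contradicts the defining property of the pair $a,b$ recorded in Claim~\ref{c-1} (equivalently, the non-realizability underlying Corollary~\ref{c-maximal}). The rerouting follows the template of Claim~\ref{c-2}: since $P$ contains all of $N_{G^T}[x]$ and, by the choice of $P$, no interior vertex of $P$ lies in $V^x_0(P)$, one can cut $P$ at edges incident with two vertices of $N_{G^T}[x]$ into subpaths each having both ends in $N_{G^T}[x]$, and then reassemble them so as to enter the clique $C$ from outside $N_{G^T}[x]$ along the edge $wc$. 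The only edges of $(G^T)^*_x$ absent from $G^T$ join $C$ to $C'$, so the real content is to check that the extra access to $C$ furnished by $wc$ lets one traverse all of $V(P)$ without ever using such a $C$--$C'$ edge, while keeping $a$ and $b$ as the two ends. The delicate point is verifying that this reassembly yields a \emph{single} path on exactly $V(P)$ rather than a disjoint union of paths and cycles; here properties~$(2)$ and~$(5)$ of Corollary~\ref{c-maximal}, together with Claim~\ref{c-2} (which confines the neighbours of $a$, $b$ and $a^+$), are what force the reassembly and make the contradiction unavoidable.
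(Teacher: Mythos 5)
Your reduction of the second assertion to the first is correct and matches the paper: given that $A$ is anticomplete to $C$, a claw at $a$ with a vertex $v \in C$ forces $A \cup \{a,a'\}$ to be a clique (the paper does exactly this with the $4$-tuple $\{a,a',u,v\}$). The problem is the first assertion, which is the real content of the claim, and there your argument is only a sketch with the hard step missing. You propose to reroute $P$ into an $(a,b)$-path of $G^T$ on the same vertex set using the edge $wc$, and you yourself flag ``the delicate point'' --- that cutting $P$ into subpaths and reassembling them must yield a single $(a,b)$-path rather than a union of paths and cycles --- but you never carry it out; invoking properties (2) and (5) of Corollary~\ref{c-maximal} as ``what force the reassembly'' is not an argument. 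Moreover, your premise that the only edges of $(G^T)^*_x$ absent from $G^T$ join $C$ to $C'$ is false: by Claim~\ref{c-2}, $N_{G^T}(a)\cap C'=\{a'\}$ and $N_{G^T}(b)\cap C'=\{b'\}$, so the completion also adds the edges from $a$ to $C'\setminus\{a'\}$, from $b$ to $C'\setminus\{b'\}$, and possibly $ab$; any rerouting would have to avoid all of these as well, which makes the unverified reassembly step even more doubtful. As it stands, the main assertion is unproven.

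For comparison, the paper needs no path surgery at all for the first assertion; it is a short local argument. Supposing $u \in A$ is adjacent to $v \in C$, Claim~\ref{c-2} gives that $a^+$ is not adjacent to $v$ (so $u \neq a^+$), and claw-freeness on $\langle\{a,a^+,a',v\}\rangle_{G^T}$ forces the edge $a^+a'$. Then $\langle\{v,u,a^+,a',x\}\rangle_{G^T}$ (a $5$-cycle, using that $A$ is a clique by Claim~\ref{c-3}) is a $2$-connected subgraph of $\langle N_{G^T}(a)\rangle_{G^T}$ containing the two non-adjacent pairs $u,x$ and $a',v$, so Lemma~\ref{l-non-adjacent} makes $\langle N_{G^T}(a)\rangle_{G^T}$ non-complete and $2$-connected --- and then Claim~\ref{c-1} applied to the vertex $a$ yields an immediate contradiction ($a$ would have to be adjacent to itself). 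If you want to salvage your approach, you would need to actually construct the rerouted path and prove it is a single $(a,b)$-path avoiding every completion edge; the paper's route shows this can be bypassed entirely.
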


\begin{proofcl}
We use Claims~\ref{c-2} and~\ref{c-3}
and give the proof for $A$ (the arguments for $B$ are similar).

For the sake of a contradiction, suppose that there is a vertex, say, $v$, 
of $C$ which is adjacent to a vertex $u\in A$. 
We note that $a^+$ is not adjacent to $v$
(in particular, $a^+$ is distinct from $u$).
Considering the graph $\langle \{a,a^+,a',v\} \rangle_{G^T}$,
we see that $a^+$ is adjacent to $a'$
(since $G^T$ is claw-free).
We observe that $\langle \{v,u,a^+,a',x\} \rangle_{G^T}$
is a $2$-connected subgraph of $\langle N_{G^T}(a)\rangle_{G^T}$
and $u,x$ and $a',v$ are two pairs of non-adjacent vertices.
By Lemma~\ref{l-non-adjacent}, $\langle N_{G^T}(a)\rangle_{G^T}$ is $2$-connected,
and we obtain a contradiction with Claim~\ref{c-1}. 

Furthermore, we recall that
$a$ is adjacent to $a'$ (by Claim~\ref{c-2})
and $A$ induces a clique (by Claim~\ref{c-3}).
Clearly, $u$ is adjacent to $a$.
Considering $\langle \{a,a',u,v\} \rangle_{G^T}$
(where $v$ is a vertex of $C$), we see that $u$ is adjacent to $a'$
(since $G^T$ is claw-free).
Thus, the set $A \cup \{a, a'\}$ induces a clique.
\end{proofcl}

%
%
\begin{claim}
\label{c-5}
$N_{G^T}(a) \cap N_{G^T}(b) = C  \cup \{x\}$.
\end{claim}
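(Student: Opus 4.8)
The plan is to prove the two inclusions separately. The inclusion $C \cup \{x\} \subseteq N_{G^T}(a) \cap N_{G^T}(b)$ is immediate: $x$ is adjacent to both $a$ and $b$ by Claim~\ref{c-1}, and every vertex of $C$ is adjacent to $a$ and to $b$ since $C \cup \{a\}$ and $C \cup \{b\}$ induce cliques by Claim~\ref{c-3}. For the reverse inclusion I would take a common neighbour $z$ of $a$ and $b$ and show $z \in C \cup \{x\}$. Recalling that $N_{G^T}(x) = \{a,b\} \cup C \cup C'$, I would split into the case $z \in N_{G^T}[x]$ and the case $z \notin N_{G^T}[x]$. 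In the first case $z \in \{x\} \cup C \cup C'$ (as $z \neq a,b$), and a vertex of $C'$ adjacent to $a$ equals $a'$ while one adjacent to $b$ equals $b'$ by Claim~\ref{c-2}; since $a' \neq b'$ by property~$(2)$ of Corollary~\ref{c-maximal}, no vertex of $C'$ is a common neighbour, leaving $z \in \{x\} \cup C$. Thus everything reduces to ruling out the second case, that is, to showing $A \cap B = \emptyset$.

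So suppose for contradiction that $z \in A \cap B$. By Claim~\ref{c-4}, $z$ is adjacent to $a, a', b, b'$ (using the cliques $A \cup \{a,a'\}$ and $B \cup \{b,b'\}$), it is not adjacent to $x$, and it is adjacent to no vertex of $C$. First I would argue that $z$ lies on the fixed path $P$: otherwise the component of $(G^T)^*_x - V(P)$ containing $z$ would have the four distinct vertices $a, b, a', b'$ (all on $P$, since $P$ contains $N_{G^T}[x]$) among its neighbours, contradicting that $P$ is a Tutte path. I would also record an auxiliary fact coming from the choice of $P$: the neighbour $a^+$ of $a$ on $P$ does not lie in $N_{G^T}[x]$, since otherwise $a$ itself would lie in $V^x_1(P)$, forcing $a' = a$ against property~$(2)$; consequently, applying claw-freeness to $\langle \{a, a^+, a', x\} \rangle_{G^T}$, one finds that $a^+$ is adjacent to $a'$.

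The remaining step, turning the existence of $z$ into a contradiction, is where the work lies, and the route I would pursue mirrors the proof of Claim~\ref{c-4}: exhibit a $2$-connected subgraph of $\langle N_{G^T}(a) \rangle_{G^T}$ containing two disjoint pairs of non-adjacent vertices, conclude by Lemma~\ref{l-non-adjacent} that $\langle N_{G^T}(a) \rangle_{G^T}$ is $2$-connected, and derive a contradiction with Claim~\ref{c-1} (which, applied with centre $a$, would absurdly make $a$ adjacent to itself). Concretely, for a vertex $v \in C$ the set $\{v, x, a', z\}$ induces the path $v\,x\,a'\,z$ in $\langle N_{G^T}(a) \rangle_{G^T}$ (the pairs $vz$, $va'$, $xz$ being non-edges by Claim~\ref{c-4} and by the absence of edges between $C$ and $C'$), and a single further vertex of $N_{G^T}(a)$ adjacent to both $v$ and $z$ would close this path into a $2$-connected subgraph carrying the two non-adjacent pairs $\{x,z\}$ and $\{v,a'\}$.

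The main obstacle is producing that closing vertex, because the only candidate adjacent both to a vertex of $C$ and to a vertex of $A$ is $b$ (all of $A \cup B$ avoids $C$ by Claim~\ref{c-4}), and $b \in N_{G^T}(a)$ precisely when $ab$ is an edge. I therefore expect the argument to divide according to whether $ab \in E(G^T)$: when it is, the subgraph on $\{v, x, a', z, b\}$ closes immediately and Lemma~\ref{l-non-adjacent} applies; when it is not, I would instead exploit $z \in V(P)$ together with $a^+ \notin N_{G^T}[x]$ and Claim~\ref{c-2} to reroute $P$ through the clique $N_{(G^T)^*_x}[x]$ into an $(a,b)$-path $P'$ with $V(P') = V(P)$ in which the neighbour of $a$ lies in $N_{G^T}[x]$, so that $a \in V^x_1(P')$ and $a = a^x_{P'}$, contradicting property~$(2)$ (via property~$(6)$) of Corollary~\ref{c-maximal}. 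Arranging the rerouting so that it genuinely forces $a$ into $V^x_1(P')$ is the delicate point I would expect to absorb most of the effort.
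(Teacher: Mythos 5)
Your reduction to showing $A \cap B = \emptyset$ matches the paper's, and your case $ab \in E(G^T)$ is correct: the induced subgraph on $\{v,x,a',z,b\}$ contains the spanning cycle $v\,x\,a'\,z\,b\,v$, Lemma~\ref{l-non-adjacent} then makes $\langle N_{G^T}(a)\rangle_{G^T}$ $2$-connected, and (exactly as in the paper's proof of Claim~\ref{c-4}) this contradicts Claim~\ref{c-1}. But the case $ab \notin E(G^T)$ is a genuine gap: you only sketch a rerouting of $P$ that would force $a \in V^x_1(P')$, and you yourself flag this as the unresolved ``delicate point''. Nothing in your setup makes that rerouting evident --- the edges available at $z$ (to $a$, $a'$, $b$, $b'$) do not obviously recombine the two segments of $P$ around $z$ into an $(a,b)$-path on the same vertex set whose first edge lands in $N_{G^T}[x]$ --- so as written the proof is incomplete precisely where the work lies.

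The paper avoids the case split entirely by changing the centre of the Lemma~\ref{l-non-adjacent} argument from $a$ to $a'$. All four vertices $a, z, b', x$ lie in $N_{G^T}(a')$ ($a$ by property~(7) of Corollary~\ref{c-maximal}, $z$ because $A \cup \{a,a'\}$ is a clique by Claim~\ref{c-4}, $b'$ because $C'$ is a clique, $x$ trivially), and they induce a $4$-cycle $a\,z\,b'\,x$: the pairs $a,b'$ and $x,z$ are non-edges by Claim~\ref{c-2} and by $z \notin N_{G^T}[x]$. Since a $4$-cycle is already $2$-connected and carries two disjoint non-adjacent pairs, no closing fifth vertex --- and hence no edge $ab$ --- is needed; Lemma~\ref{l-non-adjacent} gives that $\langle N_{G^T}(a')\rangle_{G^T}$ is $2$-connected, and Claim~\ref{c-1} applied to $a'$ forces $a'b \in E(G^T)$, contradicting Claim~\ref{c-2} ($N_{G^T}(b) \cap C' = \{b'\}$). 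Two minor further points: your derivation of $a^+a' \in E(G^T)$ from claw-freeness of $\langle\{a,a^+,a',x\}\rangle_{G^T}$ fails, since $a'x$ is an edge and so this set can never be a claw (one must take a vertex of $C$ as the fourth vertex, as in the proof of Claim~\ref{c-4}); and your observation that $z \in V(P)$, while correct, is never used in the part of your argument that is actually complete.
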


\begin{proofcl} 
We note that every vertex of $C  \cup \{x\}$ belongs to 
$N_{G^T}(a) \cap N_{G^T}(b)$ (by Claim~\ref{c-3}).

Suppose, to the contrary, that there is a vertex $z$ of
$N_{G^T}(a) \cap N_{G^T}(b)$ not belonging to $C  \cup \{x\}$.
By Claim~\ref{c-2}, $z$ does not belong to $C'$.
Consequently, $z$ does not belong to $N_{G^T}[x]$,
hence it belongs to $A \cap B$, and thus it is adjacent to $a'$ and $b'$
(by Claim~\ref{c-4}).
Using Claims~\ref{c-2}, \ref{c-3} and~\ref{c-4},
we observe that the graph $\langle \{a, z, b', x\} \rangle_{G^T}$
is a $2$-connected subgraph of
$\langle N_{G^T}(a')\rangle_{G^T}$
and $a, b'$ and $z, x$ are two pairs of non-adjacent vertices.
By Lemma~\ref{l-non-adjacent}, $\langle N_{G^T}(a') \rangle_{G^T}$ is $2$-connected.
By Claim~\ref{c-1}, $a'$ is adjacent to $b$, which contradicts Claim~\ref{c-2}.
\end{proofcl}

Now we show that $x$ is the only vertex whose neighborhood in $G^T$
is non-complete and $2$-connected.
Suppose, to the contrary, that there is such a vertex $y$ distinct from~$x$.
By Claim~\ref{c-1}, $y$ is adjacent to $a$ and $b$,
and thus it belongs to $C$ by Claim~\ref{c-5}.
In particular, $x$ and $y$ are adjacent.
Furthermore, $\{a,b\}$ is a cut in $\langle N_{G^T}(y) \rangle_{G^T}$.
We consider a component of
$\langle N_{G^T}(y) \rangle_{G^T} - \{a,b\}$ not containing $x$
(clearly, this component contains no vertex of $N_{G^T}[x]$);
and we choose a vertex, say $z$, of this component
such that $z$ is adjacent to $a$. 
Thus, $z$ belongs to $A$.
By Claim~\ref{c-4}, we get that $z$ is not adjacent to $y$,
a contradiction. 

Since the neighborhood of every vertex except of $x$
is complete or not $2$-connected, the graph $G^T - x$ is $2$-closed,
hence Lemma~\ref{l-cover} gives a suitable clique covering $\mathcal{K}_0$
of $G^T - x$.

In addition, we define a system $\mathcal{K}_{x}$ of cliques covering all edges 
incident with $x$.
We discuss two cases.
In case $a$ and $b$ are not adjacent, we recall that each of the sets 
$C \cup \{a,x\}$, $C \cup \{b,x\}$ and $C'\cup\{x\}$ induces a clique in~$G^T$
(by Claim~\ref{c-3}), and
we let $\mathcal{K}_{x}$ be the system consisting of these three cliques.
Otherwise, we note that $C \cup \{a,b,x\}$ induces a clique and we let
$\mathcal{K}_{x}=\{\la C\cup\{a,b,x\}\ra_{G^T},\la C'\cup\{x\}\ra_{G^T}\}$.

We show that the cliques of $\mathcal{K}_{x}$ are maximal cliques in $G^T-x$.
Consider a set $S$ of vertices inducing a clique of $\mathcal{K}_{x}$,
and suppose, to the contrary, that there is a vertex, say $u$, such that 
$S \cup \{u\}$ induces a clique in $G^T-x$.
We discuss two cases.
In case $a$ or $b$ belongs to $S$, we observe that $u$ belongs to 
$A \cup B \cup C'$; but Claim~\ref{c-4} implies that $u$ is not adjacent to 
any vertex of $C$, a contradiction.
Otherwise, we have that $S = C'$.
By definition, $u$ is adjacent to $a'$ and $b'$.
In~particular, $a^+$ is distinct from $u$ (by Claim~\ref{c-2}).
Considering $\langle \{a',x,a^+,u\} \rangle_{G^T}$, we have that $a^+$ is 
adjacent to $u$ (since $G^T$ is claw-free).
We observe that $\langle \{a,x,b',u,a^+\} \rangle_{G^T}$
is a $2$-connected subgraph of $\langle N_{G^T}(a')\rangle_{G^T}$
and $a,b'$ and $a^+,x$ are two pairs of non-adjacent vertices.
By Lemma~\ref{l-non-adjacent}, $\langle N_{G^T}(a')\rangle_{G^T}$ is 
$2$-connected, a contradiction.

We consider the (already defined) system $\mathcal{K}_{0}$ of cliques covering
$G^T - x$, and we extend the system to $G^T$.
For every clique $K$ of $\mathcal{K}_{x}$, we discuss two cases.
In case $K$ belongs to $\mathcal{K}_0$, we replace $K$ by the clique
$\langle V(K) \cup \{x\} \rangle_{G^T}$ in the system.
Otherwise, we observe that no vertex of $K$ belongs to three cliques of 
$\mathcal{K}_0$ (by Lemma~\ref{l-cover} since $K$ is a maximal clique in $G^T-x$).
If $K$ is induced by $C'$ or $C \cup \{a\}$ or $C \cup \{a, b\}$, then we add
$\langle V(K) \cup \{x\} \rangle_{G^T}$ to the system.
If $K$ is induced by $C \cup \{b\}$, then we add $\langle \{b, x\} \rangle_{G^T}$.

The resulting system of cliques covers $E(G^T)$ such that every vertex of $G^T$
belongs to at most three cliques.
We conclude that $G^T$ is a line graph of a hypergraph of rank at most $3$
(by Theorem~\ref{t-hypergraphs}).
\end{proofbt}


\section{Concluding remark}
\label{sec-concluding}

We remark that the Tutte-closure of a graph can be alternatively defined 
with an additional condition that the local completion operation is only 
applied, among vertices $x$ for which $\Gstx$ remains non-Tutte-connected,
to those having 2-connected neighborhood.
From the proof of Theorem~\ref{t-closure} one can see that using this alternative
definition does not affect the reasoning throughout the paper.


\end{document}